\newcommand{\ddx}[1][1]{\ifnum#1=1 \frac{d}{dx} \else \frac{d^{#1}}{dx^{#1}} \fi}
\newcommand{\ddy}[1][1]{\ifnum#1=1 \frac{d}{dy} \else \frac{d^{#1}}{dy^{#1}} \fi}
\newcommand{\ddt}[1][1]{\ifnum#1=1 \frac{d}{dt} \else \frac{d^{#1}}{dt^{#1}} \fi}
\newenvironment{remark}[1][Remark]{\begin{trivlist}
\item[\hskip \labelsep {\bfseries #1}]}{\end{trivlist}}
\theoremstyle{plain}
\newtheorem{theorem}{Theorem}[section]  
\newtheorem{proposition}[theorem]{Proposition}
\newtheorem{corollary}[theorem]{Corollary}
\newtheorem{lemma}[theorem]{Lemma}
\theoremstyle{definition}
\theoremstyle{remark}
\numberwithin{equation}{section}
\title{On projections of the supercritical contact process: uniform mixing and cutoff phenomenon} 
\author{Stein Andreas Bethuelsen \footnote{From 01.12.2018 at the Department of Mathematics and Physics, University of Stavanger, Norway.}}
\begin{document}
\maketitle

\abstract{We consider the contact process on a countable-infinite and connected graph of bounded degree. For this process started from the upper invariant measure, we prove certain uniform mixing properties under the assumption that the infection parameter is sufficiently large. In particular, we show that the projection of such a process onto a finite subset forms a process which is $\phi$-mixing. The proof of this is based on large deviation estimates for the spread of an infection and general correlation inequalities. 
In the special case of the contact process on $\mathbb{Z}^d$, $d\geq1$, we furthermore prove the cutoff phenomenon,  valid in the entire supercritical regime. } 


\section{Introduction}

\subsection{The contact process}\label{sec CP}

The contact process is a one-parameter family of interacting particle systems, introduced by Harris in  \cite{HarrisCP1974} as a toy model for the spread of an infection in a population. 
In these processes an individual of the population is either \emph{healthy} (represented by assigning it the label $0$) or \emph{infected} (represented by assigning it the label $1$). With time, an individual recovers (or becomes healthy) at rate $1$, irrespectively of the state of the other individuals. Additionally, an individual becomes infected at a rate $\lambda$ times the number of infected individuals in its neighbourhood, where $\lambda \in (0,\infty)$ is the parameter of the model.  

More formally, the contact process  with parameter $\lambda\in(0,\infty)$, denoted here by $(\eta_t)_{t\geq0}$, is a  continuous-time Markov process on $\Omega:=\{0,1\}^V$, where $V$ is the vertex set of a graph $G=(V,E)$ representing the network on which the individuals live. For the process to be well defined, and to avoid certain technicalities, we assume throughout this paper that the graph $G$ is countable, connected and of bounded degree. We denote by $dist(x,y)$ the graph distance in $G$ between any $x,y \in V$. Then, letting $\mathcal{C}(\mathbb{R})$ denote the set of bounded and continuous functions $f \colon \Omega \rightarrow \mathbb{R}$, the contact process can be specified by its generator $L_{\lambda} \colon \mathcal{C}(\mathbb{R}) \mapsto \mathcal{C}(\mathbb{R})$, where, for $\omega \in \Omega$
\begin{equation}\label{eq contact generator}
L_{\lambda} f(\omega) := \sum_{\substack{x \in V \\ \omega(x)=1}} \left( [f(\omega^{x \leftarrow 0})-f(\omega)]+\lambda \sum_{\substack{y \in V \\ dist(y,x)=1}} [f(\omega^{y \leftarrow 1})-f(\omega)] \right).
\end{equation} 
Here, for $z \in V$ and $i \in \{0,1\}$, we denote by $\omega^{z \leftarrow i}$ the configuration where $\omega^{z \leftarrow i}(z)=i$ and $\omega^{z \leftarrow i}(x)=\omega(x)$ for any $x\neq z$. 

There is also a very useful definition of the contact process via coupling, known as the \emph{graphical construction}, which we recall in Section \ref{sec GC}. As references to interacting particle systems in general and the contact process in particular we mention the books by Liggett, \cite{LiggettIPS1985} and \cite{LiggettSIS1999}.

As can be seen immediately from the definition in \eqref{eq contact generator}, the configuration $\underbar{0} 
\in \{0,1\}^V$ where $\underbar{0}(x)=0$ for all $x\in V$, is an absorbing state. That is, if at a certain time $T$ all individuals are healthy then they remain healthy for all future times $t \geq T$. On the other hand, as is well known, the contact process having initially all individuals infected converges weakly towards a stationary (or invariant) distribution, called the \emph{upper invariant measure} and denoted here by $\bar{\nu}_{\lambda}$. In the sequel,  we denote by the \emph{upper stationary contact process} the process $\bar{\eta}=(\bar{\eta}_t)_{t \in \mathbb{R}}$ obtained as the stationary extension of the contact process under $\bar{\nu}_{\lambda}$. Further, we let $\bar{\mathbb{P}}_{\lambda}$ denote the corresponding path-space measure on the set $D_{\Omega}(\mathbb{R})$ of c\'adl\'ag functions on $\mathbb{R}$ taking values in $\Omega$ and by $\mathcal{F}$ its $\sigma$-algebra.

An important feature of the contact process is that it has a \emph{phase transition}. That is, for $G$ countable-infinite, there exists a unique (and critical) parameter value $\lambda_c=\lambda_c(G) \in (0,\infty)$ defined through the following properties. For all $\lambda< \lambda_c$ we have that $\bar{\nu}_{\lambda} = \delta_{\underbar{0}}$, where $\delta_{\underbar{0}}$ denotes the measure concentrating on $\underbar{0}$, and for all $\lambda>\lambda_c$ it holds that $\bar{\nu}_{\lambda} \neq \delta_{\underbar{0}}$. 

The contact process is said to be  \emph{supercritical} in the regime $\lambda> \lambda_c$. This also corresponds to the notion \emph{weak survival} in \cite{LiggettSIS1999} and is equivalent to having that, with positive probability, the process with initially only one infected individual never reaches the state $\underbar{0}$. Other critical parameter values have been considered in the literature, most notably the one defined through the notion of \emph{strong survival}. That is, the contact process survives strongly if, with positive probability, when initiated with only one infected individual, this individual will become (re)-infected from its neighbours  infinitely often. 

Clearly, strong survival implies weak survival. For the contact process on the integer lattices $\mathbb{Z}^d$, $d\geq1$, and many other graphs, these two critical parameter values coincide. Interestingly, for a large class of graphs, such as all d-regular trees with degree $d\geq3$, there is a regime of infection parameters where the contact process survives weakly, but not strongly. 

In Section \ref{sec discussion} we introduce yet another critical parameter value for the contact process and discuss its relation to that of weak and strong survival. Besides this, for the rest of the paper, we mainly restrict our analysis to cases where $\lambda$ is large and the contact process is well within the strong survival regime. Sufficient for this is the assumption that $\lambda> \lambda_c(\mathbb{N})$. This assumption allow us to state our theorems without posing strong restrictions on the underlying graph. 

To this end, it should be noted that $\lambda_c(\mathbb{N})= \lambda_c(\mathbb{Z})$, as proven in \cite{DurretGriffeathCP1983}. We choose to write $\lambda>\lambda_c(\mathbb{N})$ in order to emphasise that in the proofs of the following statements, we make use of the fact that, for any connected and countable-infinite graph $G=(V,E)$, to each vertex $x \in V$ there is an embedded subgraph  isomorphic to the graph $\mathbb{N}$ with $x$ as the ``origin''.

\subsection{Projections of the contact process onto finite subsets}
Our main objects of interest in this paper are projections of the contact process onto finite subsets of the underlying graph.  That is, we only observe the evolution of a partial and finite selection of all the individuals in the population. To make this precise, denote by $\mathcal{S}$ the set of all finite subsets of $V$. 
Then, given $\Delta \in \mathcal{S}$, we consider the process $\xi^{(\Delta)} =(\xi_t^{(\Delta)})_{t\in \mathbb{R}}$ on $\Omega_{\Delta}:=\{0,1\}^{\Delta}$ obtained by projecting the upper stationary contact process onto $\Delta$, that is, 
\begin{equation}\label{eq projection process}
\xi_t^{(\Delta)}(x) := \bar{\eta}_t(x), \quad x \in \Delta, t \in \mathbb{R}.
\end{equation} 
Thus, the process $\xi^{(\Delta)}$  
is a function of the Markov process $\bar{\eta}$ and as such it is an example of a \emph{hidden Markov process}.  By the stationarity of $\bar{\eta}$ it follows that also the  process $\xi^{(\Delta)}$ is a stationary process. However,  in contrast to $\bar{\eta}$, we note that $\xi^{(\Delta)}$ is not Markovian. 

Our motivation for studying the processes $(\xi^{(\Delta)})_{\Delta \in \mathcal{S}}$ is manifold. Firstly, they are  fairly natural processes to study in the view of  the contact process as a population model. Indeed, populations are typically very large and often it is in practice impossible to study the evolution of the entire population. One therefore is essentially forced  to consider observations within partial subsets of the population only.

Secondly, there has recently been a boost of  work on the contact process evolving on finite graphs. Although the contact process on finite graphs can neither survive strongly nor weakly, the time until it reaches its stationary state may depend crucially on the starting configuration and the parameter value. In fact, metastable behaviour have recently been shown to hold for the contact process in great generality concerning the graph structure as soon as the infection parameter is large enough, see e.g.\  \cite{MountfordMourratValesinYao2016} and \cite{ShapiraValesin2017}. 
Motivated by the progress seen for the contact process on general finite graphs, in this paper we study the asymptotic properties of $(\xi^{(\Delta)})_{\Delta \in \mathcal{S}}$ for the contact process on general countable-infinite graphs, for which previous works have mainly been devoted  its study on the particular graphs $\mathbb{Z}^d$ and $\mathbb{T}^d$, $d\geq 1$. (See, however, \cite{SalzanoSchonmann1997} and \cite{SalzanoSchonmann1999} for two notable exceptions).

A third motivation has been to further highlight the potentials of the methods developed in  \cite{BergBethuelsenCP2018} and \cite{LiggettSteifSD2006}. These techniques rely strongly on the so-called \emph{downward FKG} property as shown to hold for the contact process in \cite{BergHaggstromKahn2006}. (See Section \ref{sec prel mono} for a precise definition). We have focused here on proving strong (uniform) mixing properties for $\xi^{(\Delta)}$,  extending upon the mixing properties proven in \cite{BergBethuelsenCP2018}. These mixing properties are very useful to the mathematical analysis of the contact process, of which we provide several basic examples. 

For further discussions on the current work we refer to the next section, where our main results are presented, and to Section \ref{sec discussion}, where we also phrase some in our opinion intriguing open questions.

\subsection{Outline of the paper}

In Subsection \ref{sec mixing} we present our main results on the mixing properties of $\xi^{(\Delta)}$ and in Subsection \ref{sec applications} we discuss certain applications of these mixing properties. In Section \ref{sec prel} we introduce the graphical construction of the contact process and discuss some of its key properties. Proofs  of our main results are deferred to Section \ref{sec proofs}. We end with a short discussion in Section \ref{sec discussion}.

\section{Main results and applications}
Throughout this section we let $\bar{\eta}$ denote the upper stationary contact process on a \emph{countable-infinite} and \emph{connected} graph $G=(V,E)$ of \emph{bounded degree}.  Given $\Delta \in \mathcal{S}$, we focus our study on the mixing properties of the projected process $\xi^{(\Delta)}$. 

\subsection{Main results}\label{sec mixing}
Projections of the contact process have previously been considered in the literature by several (e.g.\  \cite{GalvesMarinelliOlvieriCP1989}, \cite{GoldsteinWiroonsriCP2018},  \cite{SchonmannCLT1986} and \cite{SimonisCP1998}), but to our knowledge only on the particular graphs $\mathbb{Z}^d$, $d\geq1$. 
In \cite{GoldsteinWiroonsriCP2018} and \cite{SchonmannCLT1986} two key properties of the contact process on $\mathbb{Z}$ were used in order to derive various statistics, in particular the central limit theorem, for $\xi^{(\Delta)}$ in the supercritical regime. On the one hand, the contact process is monotone (in the sense of positive association; see Section \ref{sec prel mono} for a precise definition) and on the other hand it has fast decay of temporal correlations. 

The first property extends immediately to the contact process considered on any other graph.  We next present a generalisation of the latter property. 
For $t \in \mathbb{R}$, we write  $\mathcal{F}_{\geq t}^{\Delta}$ and $\mathcal{F}_{\leq t}^{\Delta}$ for the sub-$\sigma$-algebras of $\mathcal{F}$ generated by events in $\Delta \times [t,\infty)$ and $\Delta \times (-\infty,t)$, respectively. 

\begin{proposition}\label{prop covariance} 
If $\lambda>\lambda_c(\mathbb{N})$, then for any $\Delta \in \mathcal{S}$, the projected process $\xi^{(\Delta)}$ is $\alpha$-mixing (or strong mixing), that is, 
\begin{equation}\label{eq covariance}
\lim_{t \rightarrow \infty} \sup_{A_0 \in  \mathcal{F}_{\leq 0}^{\Delta}} \sup_{B_t \in \mathcal{F}_{\geq t}^{\Delta}}
 \left| \bar{\mathbb{P}}_{\lambda} \left( B_t \cap A_0 \right) - \bar{\mathbb{P}}_{\lambda}\left(A_0\right) \bar{\mathbb{P}}_{\lambda}\left(B_t\right) \right| = 0.
\end{equation}
Moreover, there are constants $C,c \in (0,\infty)$ depending on $\lambda$ only, such that, for fixed $t\in (0,\infty)$, the lefthand side of \eqref{eq covariance} decays as $C|\Delta|e^{-ct}$.
\end{proposition}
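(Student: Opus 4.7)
The plan is to prove Proposition~\ref{prop covariance} by a decoupling argument based on the graphical construction of $\bar{\eta}$. The idea is to approximate $\bar{\eta}$ on the ``future'' region $\Delta \times [t,\infty)$ by a process that depends only on the graphical construction in $V \times [t/2,\infty)$, and is therefore independent of the past $\mathcal{F}_{\leq 0}^{\Delta}$.

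Concretely, I would first let $\eta^{t/2} = (\eta^{t/2}_s)_{s \geq t/2}$ denote the contact process started from the all-infected configuration at time $t/2$, driven by the same graphical construction as $\bar{\eta}$ on $V \times [t/2,\infty)$. Since $\eta^{t/2}$ is measurable with respect to the graphical construction on $V \times [t/2,\infty)$, any event $\tilde{B}_t$ defined in terms of $\eta^{t/2}$ restricted to $\Delta \times [t,\infty)$ is independent of $A_0 \in \mathcal{F}_{\leq 0}^{\Delta}$. Taking $\tilde{B}_t$ to be the analogue of $B_t$ with $\eta^{t/2}$ in place of $\bar{\eta}$, the triangle inequality gives
\[
 |\bar{\mathbb{P}}_\lambda(A_0 \cap B_t) - \bar{\mathbb{P}}_\lambda(A_0)\, \bar{\mathbb{P}}_\lambda(B_t)| \leq 2\, \bar{\mathbb{P}}_\lambda(B_t \triangle \tilde{B}_t) \leq 2\, \bar{\mathbb{P}}_\lambda\bigl(\bar{\eta} \neq \eta^{t/2} \text{ somewhere on } \Delta \times [t,\infty)\bigr),
\]
so it suffices to control this disagreement probability.

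Reading off the graphical construction, and using $\bar{\eta}_s \leq \eta^{t/2}_s$ by the monotone coupling, the disagreement at a single point $(x,s)$ reads
\[
\{\eta^{t/2}_s(x) > \bar{\eta}_s(x)\} = \{A_{x,s}^{t/2} \neq \emptyset,\; A_{x,s}^{t/2} \subseteq \bar{\eta}_{t/2}^{-1}(0)\},
\]
where $A_{x,s}^{t/2}$ is the random set of vertices $z$ admitting an infection path $(z,t/2) \to (x,s)$. Since $A_{x,s}^{t/2}$ only depends on the graphical construction on $V \times [t/2,s]$, it is independent of $\bar{\eta}_{t/2}$, which is distributed as $\bar{\nu}_\lambda$. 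Combining (i) the exponential bound $\bar{\nu}_\lambda(\omega \equiv 0 \text{ on } A) \leq e^{-c|A|}$, which for $\lambda > \lambda_c(\mathbb{N})$ follows from the downward FKG property of $\bar{\nu}_\lambda$ of \cite{BergHaggstromKahn2006} together with the correlation estimates of \cite{BergBethuelsenCP2018}, with (ii) the supercritical large deviation estimate that $|A_{x,s}^{t/2}|$, which by self-duality has the law of the size at time $s-t/2$ of the contact process started from $\{x\}$, either dies out or grows at least linearly with exponentially small discrepancy probability, gives for each fixed $(x,s)$
\[
\bar{\mathbb{P}}_\lambda\bigl(\eta^{t/2}_s(x) > \bar{\eta}_s(x)\bigr) \leq C e^{-c(s-t/2)}.
\]

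A union bound over $x \in \Delta$ contributes the factor $|\Delta|$. The more delicate step is to make the estimate uniform in $s \geq t$. For this I would use that $A_{x,s}^{t/2}$ is, with probability $1 - e^{-c(s-t/2)}$, contained in a ball of radius $O(s-t/2)$ around $x$ by the bounded-speed propagation of the graphical construction, and that under $\bar{\nu}_\lambda$ any such ball contains a vertex $z$ with $\bar{\eta}_{t/2}(z) = 1$ with equally high probability; feeding this spatial localisation into the exponential decay of $\bar{\nu}_\lambda(\omega \equiv 0 \text{ on } A)$ in $|A|$ yields the claimed bound $C|\Delta|e^{-ct}$. The main obstacle is precisely this last uniformisation in $s$: the per-point estimate is not summable over a continuum of times, so a crude union bound fails, and one must instead exploit the joint spatial structure of the backward light cones together with the strong correlation decay of $\bar{\nu}_\lambda$.
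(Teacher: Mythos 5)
Your overall strategy is sound and is, via duality, essentially the same as the paper's: your coupling error $\bar{\mathbb{P}}_\lambda(\bar{\eta}\neq\eta^{t/2}\text{ on }\Delta\times[t,\infty))$ is exactly the probability that some dual path started in $\Delta\times[t,\infty)$ reaches time $t/2$ but does not extend to $-\infty$, which is the event the paper calls $A_{\Delta,0,t}$ (with the cut at time $0$ rather than $t/2$) in Corollary 4.2, fed into Schonmann's argument. However, the step you yourself flag as the main obstacle --- making the per-point bound $Ce^{-c(s-t/2)}$ uniform over the continuum of times $s\geq t$ --- is genuinely unresolved in your write-up, and the fix you sketch does not address it. Localising $A^{t/2}_{x,s}$ to a ball and asserting that the ball contains an occupied vertex neither converts the uncountable union over $s$ into something summable nor implies that the dual path itself meets an occupied vertex. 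The correct resolution (and the one in the paper) is a discretisation by the Poisson events of the graphical construction: a disagreement at some $(x,s)$ with $s\in[t+k,t+k+1)$ can only be produced by a dual path emanating either from $\Delta\times\{t\}$ or from one of the arrow events pointing into $\Delta$ during that unit interval; the expected number of such events is at most $\lambda D|\Delta|$ per unit time, so the continuum union bound reduces to a countable one with tail $\sum_{k\geq0}e^{-ck}<\infty$, yielding the factor $C|\Delta|e^{-ct}$.

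A secondary weakness is your route to the per-point estimate. The direct route is the restart argument giving $\mathbb{P}_\lambda^x(u<\tau^x<\infty)\leq Ce^{-cu}$ uniformly in $x$ (the paper's Lemma 4.1), which immediately bounds the probability that the dual from $(x,s)$ survives to age $s-t/2$ but dies eventually. Your alternative --- linear growth of $|A^{t/2}_{x,s}|$ given survival, combined with $\bar{\nu}_\lambda(\omega\equiv0\text{ on }A)\leq e^{-c|A|}$ --- is workable in principle but both ingredients require proof in the stated generality ($\lambda>\lambda_c(\mathbb{N})$, arbitrary bounded-degree graph), and ingredient (i) does not follow from downward FKG as you claim: dFKG and positive association give $\bar{\nu}_\lambda(\omega\equiv0\text{ on }A)\geq\prod_{x\in A}\bar{\nu}_\lambda(\omega(x)=0)$, i.e.\ an inequality in the wrong direction. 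The bound you need is equivalent by duality to $\mathbb{P}_\lambda^A(\tau^A<\infty)\leq Ce^{-c|A|}$, which requires a separate (e.g.\ restart-type) argument --- at which point you might as well run the restart argument on the single dual path directly.
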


The main purpose of this subsection is to present a generalisation of the mixing property in \eqref{eq covariance} which, for any event $A_0 \in \mathcal{F}_{\leq 0}^{\Delta}$, compares on $\mathcal{F}_{\geq t}^{\Delta}$ the conditional measure $\bar{\mathbb{P}}_{\lambda}(\cdot \mid A)$ with its unconditional counterpart $\bar{\mathbb{P}}_{\lambda}(\cdot)$, asymptotically for large times $t$. For this, we introduce the (non-increasing) function $d_{\Delta} \colon [0,\infty) \rightarrow [0,1]$ given by
\begin{equation}\label{eq phi mixing}
d_{\Delta}(t):= \sup_{B_t \in \mathcal{F}_{\geq t}^{\Delta}}\sup_{\substack{A_0 \in \mathcal{F}_{\leq 0}^{\Delta}\\ \bar{\mathbb{P}}_{\lambda}(A_0)>0 }} \left| \bar{\mathbb{P}}_{\lambda}(B_t \mid A_0) - \bar{\mathbb{P}}_{\lambda}(B_t) \right|, \end{equation} 
which is the corresponding \emph{mixing time} of $\xi^{(\Delta)}$ with respect to the total variation distance.  
One of our main contributions in this paper is the following theorem.

\begin{theorem}\label{thm phi mixing}
If $\lambda>\lambda_c(\mathbb{N})$, then for any $\Delta \in \mathcal{S}$, the projected process $\xi^{(\Delta)}$ is $\phi$-mixing, that is, $\lim_{t\rightarrow \infty} d_{\Delta}(t) =0$.
\end{theorem}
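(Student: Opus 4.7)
The plan is to realise $\bar\eta$ through the Harris graphical construction recalled in Section~\ref{sec GC}, and to exploit that the driving Poisson processes on $V \times (0,\infty)$, denoted $\mathcal{G}^+$, are independent of the $\sigma$-algebra generated by the Poisson processes on $V \times (-\infty,0]$, which contains $\mathcal{F}_{\leq 0}^{\Delta}$. Introduce an auxiliary process $\tilde\eta$ run forward in time from $t=0$, starting from the configuration identically equal to $1$ and driven by the same $\mathcal{G}^+$ as $\bar\eta$. By monotonicity of the graphical construction, $\bar\eta_s \leq \tilde\eta_s$ for all $s \geq 0$, and $\tilde\eta$ is $\sigma(\mathcal{G}^+)$-measurable, hence independent of every $A_0 \in \mathcal{F}_{\leq 0}^{\Delta}$.

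For $B_t \in \mathcal{F}_{\geq t}^{\Delta}$ let $\tilde{B}_t$ denote the $\sigma(\mathcal{G}^+)$-event obtained by replacing $\bar\eta$ with $\tilde\eta$, and set $E_t := \{\tilde\eta_s|_\Delta = \bar\eta_s|_\Delta \text{ for all } s \geq t\}$. On $E_t$ one has $B_t = \tilde{B}_t$ as events, so using $\bar{\mathbb{P}}_{\lambda}(\tilde{B}_t \mid A_0) = \bar{\mathbb{P}}_{\lambda}(\tilde{B}_t)$ a routine computation yields
\[
  |\bar{\mathbb{P}}_{\lambda}(B_t \mid A_0) - \bar{\mathbb{P}}_{\lambda}(B_t)| \leq \bar{\mathbb{P}}_{\lambda}(E_t^c \mid A_0) + \bar{\mathbb{P}}_{\lambda}(E_t^c),
\]
and the theorem reduces to showing $\sup_{A_0} \bar{\mathbb{P}}_{\lambda}(E_t^c \mid A_0) \to 0$ as $t \to \infty$. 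The unconditional decay $\bar{\mathbb{P}}_{\lambda}(E_t^c) \leq C|\Delta| e^{-ct}$ should follow from large deviation estimates on the ``disagreement'' process $\tilde\eta - \bar\eta$, which is subadditive over the sites $y$ with $\bar\eta_0(y) = 0$ and whose contributions decay in both space and time in the supercritical regime---the same circle of ideas underlying Proposition~\ref{prop covariance}.

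The hard part will be upgrading the above to a bound that is \emph{uniform} in $A_0$. Conditioning on a small-probability event $A_0 \in \mathcal{F}_{\leq 0}^{\Delta}$ may bias $\bar\eta_0$ on $V \setminus \Delta$ toward atypical configurations with few infected sites near $\Delta$, slowing the coalescence of $\bar\eta$ with $\tilde\eta$ in an uncontrolled way. To circumvent this, I would first localise in space via the speed-of-infection estimates so that $E_t^c$ is, up to exponentially small error, determined by $\bar\eta_0|_{\Delta'}$ for a slowly growing window $\Delta'$; and then invoke the downward FKG property of $\bar{\mathbb{P}}_{\lambda}$ from \cite{BergHaggstromKahn2006}---used in a closely related spirit in \cite{BergBethuelsenCP2018} and \cite{LiggettSteifSD2006}---to stochastically dominate the conditional law of $\bar\eta_0|_{\Delta' \setminus \Delta}$ given $A_0$ by its unconditional counterpart. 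Such an FKG-based decoupling is precisely what avoids dividing by $\bar{\mathbb{P}}_{\lambda}(A_0)$, which is the obstruction in a direct passage from the $\alpha$-mixing of Proposition~\ref{prop covariance} to $\phi$-mixing, and making it quantitative in the present graph-independent setting is the step I expect to require the most care.
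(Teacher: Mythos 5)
Your coupling setup is sound: realising $\bar\eta$ and the all-ones process $\tilde\eta$ on the same forward graphical construction, using independence of the forward Poisson marks from $\mathcal{F}_{\leq 0}^{\Delta}$, and reducing the theorem to $\sup_{A_0}\bar{\mathbb{P}}_{\lambda}(E_t^c\mid A_0)\to 0$ is exactly the strategy the paper uses --- but only for its quantitative result on $\mathbb{Z}^d$ (Theorem \ref{thm phi mixing Zd}), not for Theorem \ref{thm phi mixing} on general graphs. The unconditional bound $\bar{\mathbb{P}}_{\lambda}(E_t^c)\leq C|\Delta|e^{-ct}$ is indeed available (it is essentially Corollary \ref{cor covariance} via self-duality and Lemma \ref{lem DG1}), and the dFKG reduction of an arbitrary $A_0$ to the worst case ``all zeros on $\Delta\times(-\infty,0]$'' is legitimate, although you state the domination in the wrong direction: what \eqref{eq dFKG SD} gives, and what you need to bound the decreasing disagreement functional from above, is that $\bar{\mathbb{P}}_{\lambda}(\cdot\mid A_0)$ stochastically dominates $\bar{\mathbb{P}}_{\lambda}(\cdot\mid\text{all-}0)$, not that the unconditional law dominates the conditional one (the latter is false for increasing $A_0$ and would in any case yield a bound in the useless direction).

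The genuine gap is at the crux of your plan: after reducing to the all-zero conditioning, you still must show that the conditional law of $\bar\eta_0$ near $\Delta$ carries \emph{enough} infected, surviving sites that $\bar\eta$ and $\tilde\eta$ coalesce on $\Delta$ with probability tending to $1$. On general graphs the paper only proves (Lemma \ref{lem domination200}) that \emph{one} site $x$ satisfies $\bar{\mathbb{P}}_{\lambda}(\bar\eta_0(x)=1\mid A_0)\geq\delta$ uniformly in $A_0$; combined with Theorem \ref{thm local shape theorem} this yields a coupling that succeeds with probability bounded below by some $\epsilon>0$, i.e.\ $d_{\Delta}(t)\leq 1-\epsilon$ for large $t$ (Corollary \ref{lem abscon}), but \emph{not} a success probability tending to $1$. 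The upgrade to a product-measure domination on infinitely many sites (Lemma \ref{lem phi mixing Zd}) uses translation invariance of $\mathbb{Z}^d$, and Remark \ref{rem phi mixing Zd} explicitly states that such a uniform estimate is not known on general graphs. The paper closes the resulting gap by an entirely soft step you are missing: for a stationary process that is mixing in the ergodic-theoretic sense, the $\phi$-mixing coefficient $d_{\Delta}(t)$ either tends to $0$ or is identically $1$ (Bradley, Theorem 22.3); since Corollary \ref{lem abscon} rules out the latter, $d_{\Delta}(t)\to 0$ follows without any quantitative uniform coalescence bound. Without either that dichotomy or a graph-independent substitute for Lemma \ref{lem phi mixing Zd}, your argument does not close.
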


A $\phi$-mixing process is also $\alpha$-mixing, but the opposite is in general not true. In this sense, Theorem \ref{thm phi mixing} is a strengthening of  Proposition \ref{prop covariance}. As we exemplify in the next subsection, both $\alpha$-mixing and $\phi$-mixing are strong forms of mixing having several applications to the study of the long term behaviour of $\xi^{(\Delta)}$, and in general also to $(\eta_t)_{t\geq0}$. For a general account on mixing properties for processes, we refer to \cite{BradleyStrongMixing2005}.

The proof of Theorem \ref{thm phi mixing} is based on certain modifications of a stochastic domination argument in \cite{BergBethuelsenCP2018} together with the following property for the contact process conditioned on survival. In order to state this property precisely let, for $\Lambda \subset V$,
\begin{equation}
\tau^{\Lambda} := \inf \left\{ t \geq 0 \colon \eta_t^{\Lambda} \equiv \underbar{0} \right\}
\end{equation}
be the first time $(\eta_t^{\Lambda})_{t\geq0}$ reaches the state $\underbar{0}$, where $(\eta_t^{\Lambda})_{t\geq0}$ denotes the contact process with the initial state satisfying $\eta_0^{\Delta}(x) =1$ if and only if $x\in \Lambda$. We denote the corresponding path-space measure by $\mathbb{P}_{\lambda}^{\Lambda}$.  If $\Lambda=\{y\}$ for some $y\in V$, we for simplicity write $\tau^{y}$ and $\mathbb{P}_{\Lambda}^y$. Further, for $y\in V$ and $\theta \in (0,\infty)$, let
\begin{equation}
C_{\theta}(y):= \left\{(x,s)\in V \times [0,\infty) \colon dist(y,x) \leq t\theta \right\}
\end{equation} 
be the ``forward cone'' of inclination $\theta$ and center at $y$. For $t\in [0,\infty)$, let $C_{\theta,t}(y) = C_{\theta}(y) \cap V \times [t,\infty)$ and denote by $\mathcal{F}_{\theta,t}(y)$ the $\sigma$-algebra generated by the events in $C_{\theta,t}(y)$.

\begin{theorem}\label{thm local shape theorem}
Let $\lambda>\lambda_c(\mathbb{N})$. Then there are constants $\theta, C,c \in (0,\infty) $ such that, for all $y \in V$,
\begin{align}
\sup_{B_t \in \mathcal{F}_{\theta,t}(y)} \left| \mathbb{P}_{\lambda}^y( B_t \mid \tau^{y} = \infty) - \bar{\mathbb{P}}_{\lambda}(B_t) \right| \leq Ce^{-ct}, \quad \forall t\geq0.
\end{align}
\end{theorem}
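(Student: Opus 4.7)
The plan is to couple $(\eta^{y}_s)_{s\ge0}$ and the upper stationary process $\bar{\eta}$ on a common graphical construction (Section \ref{sec GC}) — writing $\mathbb{P}$ for the governing joint probability — and prove that, on the survival event $\{\tau^{y}=\infty\}$, the two trajectories coincide throughout $C_{\theta,t}(y)$ with $\mathbb{P}$-probability at least $1-Ce^{-ct}$. Since any $B_t\in\mathcal{F}_{\theta,t}(y)$ depends only on the trajectory in $C_{\theta,t}(y)$, this will yield the claimed bound after deconditioning by $\mathbb{P}_{\lambda}^{y}(\tau^{y}=\infty)$, which by comparison with the 1D contact process at rate $\lambda>\lambda_c(\mathbb{N})$ has a uniform positive lower bound depending only on $\lambda$. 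In the coupling, monotonicity forces $\eta^{y}_s\le\bar{\eta}_s$ pointwise, so a disagreement must have the form $\bar{\eta}_s(x)=1$, $\eta^{y}_s(x)=0$; by graphical duality, such a disagreement at $(x,s)$ corresponds to the event that the backward dual started at $(x,s)$ survives all the way back in time yet fails to reach $y$ at time $0$.

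To control a single disagreement at $(x,s)\in C_{\theta,t}(y)$, I would appeal to the embedded copy of $\mathbb{N}$ rooted at $y$: along this path the backward dual stochastically dominates a 1D supercritical contact process, and the classical exponential large-deviation shape estimates for the supercritical 1D contact process (\cite{DurretGriffeathCP1983}) furnish a propagation speed $\alpha=\alpha(\lambda)>0$ and constants $C,c>0$ depending only on $\lambda$, such that for every $\theta<\alpha$ and $(x,s)\in C_{\theta}(y)$,
\begin{equation}
\mathbb{P}\bigl(\text{disagreement at }(x,s),\;\tau^{y}=\infty\bigr)\le Ce^{-cs}.
\end{equation}
Summing this over the at most $e^{\theta s\log d_{\max}}$ vertices at graph-distance $\le\theta s$ from $y$ (using bounded degree $d_{\max}$), together with a routine space-time discretization/union bound over $s\ge t$ (after shrinking $\theta$ further so that $\theta\log d_{\max}<c$), gives
\begin{equation}
\mathbb{P}\bigl(\text{some disagreement occurs in }C_{\theta,t}(y),\;\tau^{y}=\infty\bigr)\le C'e^{-c't}.
\end{equation}
On the complement of this bad event, $\ind_{B_t}(\eta^{y})=\ind_{B_t}(\bar{\eta})$ for every $B_t\in\mathcal{F}_{\theta,t}(y)$, and the theorem then follows by the triangle inequality.

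The principal obstacle is obtaining an exponential shape estimate uniform across bounded-degree graphs; the embedded-$\mathbb{N}$ comparison combined with the universal hypothesis $\lambda>\lambda_c(\mathbb{N})$ is precisely the mechanism that bypasses any graph-specific tools (such as the complete convergence theorem, which is not available at this level of generality). A secondary subtlety is that $\{\tau^{y}=\infty\}$ is positively correlated with events in $\mathcal{F}_{\theta,t}(y)$ (both being increasing in the graphical construction), so passing from the unconditional $\mathbb{P}$-estimate to the conditional law $\mathbb{P}_{\lambda}^{y}(\cdot\mid\tau^{y}=\infty)$ requires some care; this is handled using the FKG-type positive association recalled in Section \ref{sec prel mono}.
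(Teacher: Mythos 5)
Your overall architecture matches the paper's: couple $\eta^{y}$ and $\bar{\eta}$ on one graphical construction, reduce to the event that some disagreement occurs in the cone, bound a single space-time disagreement by $Ce^{-cs}$, and close with a bounded-degree union bound after shrinking $\theta$ (the paper's final step is exactly the sum $\sum_{l\geq \lfloor t\rfloor} D^{\alpha_0(l+1)}Ce^{-cl}$). The deconditioning by the uniformly positive survival probability is also fine and does not in fact need FKG. The gap is in the single-point estimate, which is where essentially all the work of the theorem lives. A disagreement at $(x,s)$ on $\{\tau^{y}=\infty\}$ means: the backward dual from $(x,s)$ survives forever, the forward process from $(y,0)$ survives forever, and yet the two families of active paths never meet. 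Your proposal bounds this by invoking ``the embedded copy of $\mathbb{N}$ rooted at $y$'' and 1D shape estimates, but (i) the backward dual is rooted at $x$, not at $y$, so it has no a priori relation to that particular half-line, and (ii) more fundamentally, linear propagation speed for each infection separately does not force two surviving infections on a general bounded-degree graph to intersect --- they could colonize disjoint subtrees. Nothing in your argument rules this out, and this is precisely the obstruction that distinguishes general graphs from $\mathbb{Z}^d$.

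The paper's Proposition \ref{cor survival in D} supplies the missing mechanism, and it requires several ingredients you have not reproduced: the restart argument of Lemma \ref{lem DG1}, which shows that within time $\epsilon t$ each surviving infection (forward from $y$, backward from $(x,s)$) contains a vertex from which it survives \emph{inside} a prescribed embedded half-line; the deliberate construction of the half-lines $\gamma^{(z)}$ as a shortest path from $z$ to $\Gamma^{(y)}$ concatenated with the tail of $\gamma^{(y)}$, so that the two infections end up propagating along a \emph{common} copy of $\mathbb{N}$; a branching-process bound confining everything to distance $O(\delta\epsilon t)$; and only then the 1D hitting-time estimate (Lemma \ref{lem estimates CP on N}, itself extracted from the edge-speed results of Durrett--Griffeath via positive association) to force the forward and backward paths on that common half-line to intersect, which produces an active path $(y,0)\rightarrow(x,s)$ and contradicts the disagreement. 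Without the merging construction and the restart localization, your appeal to ``classical exponential shape estimates'' does not yield the claimed bound $\mathbb{P}(\text{disagreement at }(x,s),\,\tau^{y}=\infty)\leq Ce^{-cs}$ on a general graph, so the central step of the proof is missing.
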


The proof of Theorem \ref{thm local shape theorem} goes by a modification of an argument in \cite{DurrettGriffeathCPhighDim1982}, who considered the contact process on $\mathbb{Z}^d$, to more general graphs. 

In general, we believe that the mixing time in Theorem \ref{thm phi mixing} decays exponentially in $t$, similarly to the bound obtained in Proposition \ref{prop covariance}.   Further, in many cases the assumption that $\lambda>\lambda_c(\mathbb{N})$ is clearly too strong. We end this subsection with a result which shows that this indeed is the case for the contact process on $\mathbb{Z}^d$, $d\geq1$. For this, for $n \in \mathbb{N}$, let $\Delta_n := [-n,n]^d$ and denote by $\xi^{(n)}$ the projection of $\bar{\eta}$ onto $\Delta_n$. 
Further, denote by
\begin{equation}\label{eq t(x)}
t(x) := \inf \{ t \geq 0 \colon \eta_t^o(x)=1 \}, \quad x \in \mathbb{Z}^d,
\end{equation}
the first time the contact process with only the individual at the origin initially infected spreads its infection to the individual at $x$. 
From the proof of the \emph{shape theorem} for the contact process (see e.g.\ \cite{GaretMarchandShape2012}, Section 5), it follows that, $\mathbb{P}_{\lambda}^o(\cdot \mid \tau^o=\infty)$-a.s.,
\begin{equation}\label{eq beta}
\lim_{n \rightarrow \infty} \frac{t(n \cdot e_1)}{n} :=\beta
\end{equation}
 for a constant $\beta=\beta(\lambda)$ representing the asymptotic linear speed for the spread an infection in the direction of $e_1:=(1,0, \dots, 0)$, and which is strictly positive whenever $\lambda>\lambda_c(\mathbb{Z}^d)$. 

\begin{theorem}\label{thm phi mixing Zd}
Let $\lambda>\lambda_c(\mathbb{Z}^d)$. Then, for any $\epsilon>0$, there are constants $C,c \in (0,\infty)$ such that for all $n \in \mathbb{N}$,
\begin{equation}\label{eq phi mixing Zd upper}
d_{\Delta_n}(t) \leq C |\Delta_n| e^{-ct}, \quad \forall \: t \geq \beta n (1+\epsilon).
\end{equation}
Furthermore, we have that
\begin{equation}
\begin{aligned}\label{eq cutoff}
\lim_{n \rightarrow \infty} d_{\Delta_n}\left( \beta n (1+\epsilon)\right) =0 \quad \text{ and } \quad \lim_{n \rightarrow \infty} d_{\Delta_n}\left( \beta n (1-\epsilon)\right) =1.
\end{aligned}
\end{equation}
\end{theorem}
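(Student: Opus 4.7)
For the exponential upper bound \eqref{eq phi mixing Zd upper} I would adapt the coupling-based proof of Theorem~\ref{thm phi mixing}, replacing its qualitative input (Theorem~\ref{thm local shape theorem}) by the large-deviation form of the shape theorem on $\mathbb{Z}^d$, valid throughout $\lambda>\lambda_c(\mathbb{Z}^d)$ (e.g.\ \cite{DurrettGriffeathCPhighDim1982} and later refinements). Concretely, for each $x\in\Delta_n$ the backward dual contact process from $(x,t)$ either dies out or, by any intermediate time $s\in[t/2,t]$, has filled a ball of radius $s/\beta\cdot(1-\epsilon/4)$ around $x$ with error probability $Ce^{-cs}$. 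For $t\geq\beta n(1+\epsilon)$ this forces the dual from every $x\in\Delta_n$ to exit $\Delta_n$ by a distance of order $n\epsilon$ well before time $0$. A further coupling step, exploiting the exponential spatial decay of correlations in $\bar{\nu}_{\lambda}$ (equivalently, a resampling of $\bar{\eta}_0|_{\Delta_n^c}$ independently of $A_0$), combined with a union bound over $x\in\Delta_n$, then yields the required $C|\Delta_n|e^{-ct}$ bound uniformly in $(A_0,B_t)$. The upper side of the cutoff, $\lim_n d_{\Delta_n}(\beta n(1+\epsilon))=0$, is a direct consequence of \eqref{eq phi mixing Zd upper}, since $Cn^d e^{-c\beta n(1+\epsilon)}\to0$.

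For the lower side of the cutoff I would exhibit explicit events saturating $d_{\Delta_n}$. Set $S:=\Delta_{\lfloor n\epsilon/4\rfloor}\subset\Delta_n$, so that $\mathrm{dist}(S,\Delta_n^c)\geq n(1-\epsilon/4)$, and take
\[
A_0:=\{\bar{\eta}_0(x)=0\text{ for all }x\in\Delta_n\}\in\mathcal{F}_{\leq 0}^{\Delta_n},\qquad B_t:=\{\bar{\eta}_t(x)=0\text{ for all }x\in S\}\in\mathcal{F}_{\geq t}^{\Delta_n}.
\]
Three facts are needed: (i) $\bar{\mathbb{P}}_{\lambda}(A_0)>0$: by downward FKG applied to the decreasing events $\{\eta(x)=0\}$, $\bar{\nu}_{\lambda}(\eta|_{\Delta_n}\equiv0)\geq(1-\rho_{\lambda})^{|\Delta_n|}>0$, where $\rho_{\lambda}:=\bar{\nu}_{\lambda}(\eta(0)=1)>0$; (ii) $\bar{\mathbb{P}}_{\lambda}(B_t)=\bar{\nu}_{\lambda}(\eta|_S\equiv0)\to0$ as $n\to\infty$, since $\bar{\nu}_{\lambda}$ is ergodic under translations with positive density $\rho_{\lambda}$; (iii) $\bar{\mathbb{P}}_{\lambda}(B_t\mid A_0)\to1$: under $A_0$, any $\bar{\eta}_t(y)=1$ with $y\in S$ forces an infection path in the graphical construction from some $(z,0)$ with $z\in\Delta_n^c$ to $(y,t)$, of spatial length $\geq n(1-\epsilon/4)$ completed in time $\beta n(1-\epsilon)$; by downward monotonicity $\bar{\eta}_0|_{\Delta_n^c}$ conditional on $A_0$ is stochastically dominated by $\mathbf{1}_{\Delta_n^c}$, so this probability is bounded by the probability that the contact process started from $\mathbf{1}_{\Delta_n^c}$ reaches $y$ by time $t$; the shape-theorem LDP bounds this by $Ce^{-cn}$ per $y$, and a union bound over the $O(n^d)$ points of $S$ gives $\bar{\mathbb{P}}_{\lambda}(B_t^c\mid A_0)\leq Cn^d e^{-cn}\to0$. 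Combining (i)--(iii), $d_{\Delta_n}(\beta n(1-\epsilon))\geq\bar{\mathbb{P}}_{\lambda}(B_t\mid A_0)-\bar{\mathbb{P}}_{\lambda}(B_t)\to1$, and the trivial bound $d_{\Delta_n}\leq1$ completes the second limit in \eqref{eq cutoff}.

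The main obstacle is the uniform estimate in the first part: a direct application of Proposition~\ref{prop covariance} only yields $|\bar{\mathbb{P}}_{\lambda}(B_t\mid A_0)-\bar{\mathbb{P}}_{\lambda}(B_t)|\leq C|\Delta_n|e^{-ct}/\bar{\mathbb{P}}_{\lambda}(A_0)$, which is useless for the very small-probability events $A_0$ (e.g.\ those used in the cutoff lower bound above, whose probability decays exponentially in $|\Delta_n|$). The delicate step is to replace this naive Bayes bound by a coupling that handles arbitrarily small $\bar{\mathbb{P}}_{\lambda}(A_0)$, where one must invoke the shape-theorem LDP to localize, both spatially and temporally, the information on which $\bar{\eta}_t|_{\Delta_n}$ actually depends.
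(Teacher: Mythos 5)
Your treatment of the second limit in \eqref{eq cutoff} is essentially the paper's argument: the paper also conditions on emptiness of $\Delta_n$, compares with the process started from $\mathbf{1}_{\Delta_n^c}$, and invokes the second inequality of Lemma \ref{lem Garet Marchand} together with self-duality (the paper keeps the inner box $\Delta_r$ fixed and sends $r\to\infty$ afterwards, while you let it grow as $n\epsilon/4$; both work). The problem is the first part, \eqref{eq phi mixing Zd upper}, where you correctly identify the obstacle --- uniformity over events $A_0$ of arbitrarily small probability --- but do not actually overcome it. The step ``exploiting the exponential spatial decay of correlations in $\bar{\nu}_{\lambda}$ (equivalently, a resampling of $\bar{\eta}_0|_{\Delta_n^c}$ independently of $A_0$)'' is not valid: one-time spatial mixing of $\bar{\nu}_{\lambda}$ says nothing about the effect of conditioning on a \emph{space-time} event in $\Delta_n\times(-\infty,0]$, and the extreme such event (all of $\Delta_n$ healthy throughout the past) genuinely depresses the infection density outside $\Delta_n$ at time $0$; the two formulations in your parenthesis are not equivalent, and the second is false as stated.

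The paper closes this gap with two ingredients you are missing. First, the dFKG stochastic-domination inequality \eqref{eq dFKG SD} reduces the supremum over all $A_0\in\mathcal{F}_{\leq 0}^{\Delta_n}$ to the single worst-case event $A_0^{*}=\{\bar{\eta}_s(x)=0\ \forall\,(x,s)\in\Delta_n\times(-\infty,0]\}$, sandwiching every conditional law between $\bar{\mathbb{P}}_{\lambda}(\cdot\mid A_0^{*})$ and $\mathbb{P}_{\lambda}^{\mathbb{Z}^d}(\cdot)$. Second, and this is the technical heart of the proof, Lemma \ref{lem phi mixing Zd} shows that even under $A_0^{*}$ the time-zero configuration stochastically dominates a Bernoulli product measure of uniformly positive density $\delta$ on a sparse sublattice just outside $\Delta_n$ --- uniformly in $n$ and in the length of the conditioning window. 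This is what makes $M_x\leq\epsilon t$ overwhelmingly likely and lets the coupling via $t'(\cdot)$ (first inequality of Lemma \ref{lem Garet Marchand}) succeed by time $\beta n(1+\epsilon)$. That lemma is proved via the Liggett--Steif domination machinery and relies on the translation symmetries of $\mathbb{Z}^d$ (the paper's remark notes it is precisely the obstruction to extending the theorem to general graphs), so it cannot be waved away as a routine decorrelation statement; without it, or a substitute of equal strength, your upper bound does not go through.
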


In terms of mixing times, as typically studied for Markov chains (see e.g.\ \cite{LevinPeresWilmer2017}),  the second part of Theorem \ref{thm phi mixing Zd} says that the processes $(\xi^{(n)})_{n\in \mathbb{N}}$ has (asymptotically) a cutoff at time $\tau_n:=\beta n$, called the mixing time of $(\xi^{(n)})_{n\in \mathbb{N}}$. 
As we discuss more thoroughly in Section \ref{sec thm phi mixing Zd} (see also Section \ref{sec discussion}), we furthermore believe that the estimate on the mixing time in \eqref{eq phi mixing Zd upper} hold under minimal assumptions on the graph structure as soon as $\lambda>\lambda_c(\mathbb{N})$. 

\subsection{Applications}\label{sec applications}

As a first application of the mixing properties stated in the previous subsection, we  focus on large deviation estimates for $\xi^{(\Delta)}$. For $f \colon \Omega_{\Delta} \rightarrow \mathbb{R}$ bounded, denote by 
\begin{equation}Z_t^{(\Delta)}(f) := \int_{0}^tf(\xi_t^{(\Delta)}) dt, \quad t >0.\end{equation}
By Theorem \ref{thm phi mixing} it follows that also $(Z_t^{(\Delta)}(f))_{t\in \mathbb{R}}$ is $\phi$-mixing. Large deviation estimates for $\phi$-mixing sequences have been proven in \cite{SchonmannMixing1989}. 
By an immediate extension of the proof in \cite{SchonmannMixing1989} to continuous-time processes, we obtain our first application of Theorem \ref{thm phi mixing}, as stated next.

\begin{corollary}\label{cor LDP}
Let $\lambda > \lambda_c(\mathbb{N})$. Then, for any $\epsilon>0$ there are constants $C,c>0$ such that 
\begin{equation}
\bar{\mathbb{P}}_{\lambda} \left( Z_t^{(\Delta)}(f)/t \notin  (\bar{\nu}_{\lambda}(f)- \epsilon, \bar{\nu}_{\lambda}(f)+ \epsilon) \right) \leq Ce^{-ct},\quad \forall \: t\geq0.
\end{equation}
\end{corollary}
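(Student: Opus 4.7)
The plan is to discretize the continuous-time integral $Z_t^{(\Delta)}(f)$ into unit-length blocks, to observe that these blocks form a bounded stationary $\phi$-mixing sequence by virtue of Theorem \ref{thm phi mixing}, and to apply the large deviation estimate of \cite{SchonmannMixing1989} for such sequences.

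First, I would set $M := \supnorm{f}$ and $\mu := \bar{\nu}_{\lambda}(f)$, and introduce the block averages
\begin{equation}
Y_k := \int_{k}^{k+1} f\bigl(\xi_s^{(\Delta)}\bigr)\, ds, \qquad k \in \mathbb{Z}.
\end{equation}
Stationarity of $\xi^{(\Delta)}$ forces $(Y_k)_{k\in\mathbb{Z}}$ to be a stationary sequence with $\abs{Y_k}\leq M$ and common mean $\mu$; moreover, since $Y_k$ is measurable with respect to $\sigma\bigl(\xi_s^{(\Delta)}\colon s \in [k,k+1]\bigr)$, its $\phi$-mixing coefficients $\tilde{\phi}(n)$ are bounded by $d_{\Delta}(n-1)$, and hence tend to $0$ by Theorem \ref{thm phi mixing}.

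The main step is then to invoke Schonmann's exponential large deviation bound \cite{SchonmannMixing1989} for uniformly bounded stationary $\phi$-mixing sequences: this yields constants $C_1, c_1>0$ (depending on $\epsilon$, $M$ and on the rate sequence $\tilde{\phi}$) such that
\begin{equation}
\bar{\mathbb{P}}_{\lambda}\Biggl( \abs{\frac{1}{N}\sum_{k=0}^{N-1} Y_k - \mu} \geq \epsilon/2 \Biggr) \leq C_1 e^{-c_1 N}, \qquad \forall \, N \in \mathbb{N}.
\end{equation}

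Finally, to transfer the bound to continuous time, I would take $N := \floor{t}$ and use $\abs{f}\leq M$ to obtain
\begin{equation}
\abs{\frac{Z_t^{(\Delta)}(f)}{t} - \frac{1}{N}\sum_{k=0}^{N-1} Y_k} \leq \frac{4M}{N+1},
\end{equation}
which is smaller than $\epsilon/2$ for $t$ sufficiently large; the bound on the remaining bounded range of $t$ is enforced by enlarging $C$. The only non-routine point is the passage from discrete to continuous time in Schonmann's proof, but since that proof relies solely on stationarity, uniform boundedness and $\tilde{\phi}(n) \to 0$ of the underlying sequence, the block decomposition above makes the transfer automatic, as signalled in the paragraph preceding the corollary.
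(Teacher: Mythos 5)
Your proposal is correct and follows essentially the same route as the paper, which derives the corollary from the $\phi$-mixing property of Theorem \ref{thm phi mixing} combined with Schonmann's large deviation estimate for bounded stationary $\phi$-mixing sequences, extended to continuous time (the paper leaves the details to the reader). Your explicit unit-block discretization, the bound $\tilde{\phi}(n)\leq d_{\Delta}(n-1)$, and the $O(M/t)$ comparison between $Z_t^{(\Delta)}(f)/t$ and the block average are precisely the standard way to carry out that extension.
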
 

By combining the large deviation estimates in Corollary \ref{cor LDP} with standard subadditivity arguments as in \cite{LebowitzSchonmannLDP1988}, Section $3$, we furthermore obtain a large deviation principle. To state this precisely, consider the partial ordering on $\Omega_{\Delta}$ given by $\sigma \leq \omega$ if and only if $\sigma(x) \leq \omega(x)$ for all $x\in \Delta$. We say that a function $f \colon \Omega_{\Delta} \rightarrow \mathbb{R}$ is \emph{increasing} if $f(\sigma) \leq f(\omega)$ whenever $\sigma \leq \omega$.

\begin{theorem}\label{thm LDP}
 Let $\lambda > \lambda_c(\mathbb{N})$ and $f \colon  \Omega_{\Delta} \rightarrow \mathbb{R}$ bounded and increasing. 
 Then there exists $\psi \colon \mathbb{R} \rightarrow \{-\infty\} \cup (-\infty,0]$ concave such that $\psi(x) < 0$ for all $x \neq \bar{\nu}_{\lambda}(f)$ and, for all $a<b$,
\begin{equation}
\lim_{t \rightarrow \infty} t^{-1} \log \bar{\mathbb{P}}_{\lambda} \left(\frac{Z_t^{\Delta}(f)}{t} \in [a,b] \right) =  \sup_{a \leq x \leq b} \psi(x).
\end{equation}
\end{theorem}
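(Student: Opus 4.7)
The plan is to combine the exponential concentration from Corollary \ref{cor LDP} with a temporal subadditivity derived from FKG, adapting the subadditivity scheme of Lebowitz and Schonmann \cite{LebowitzSchonmannLDP1988} to the time setting of $\xi^{(\Delta)}$. Write $m := \bar{\nu}_\lambda(f)$. Since $f$ is increasing on $\Omega_\Delta$, for every $b\in\bR$ the event $\{Z_t^{(\Delta)}(f)\ge tb\}$ is increasing and $\{Z_t^{(\Delta)}(f)\le tb\}$ is decreasing as an event on trajectory space $D_\Omega(\bR)$. The graphical construction (recalled in Section \ref{sec GC}) expresses $(\bar{\eta}_t)_{t\in\bR}$ as a coordinate-wise monotone functional of independent Poisson random measures, so $\bar{\bP}_\lambda$ inherits the FKG inequality for increasing (resp.\ decreasing) events in $V\times\bR$.

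Combining FKG with the time-stationarity of $\bar{\bP}_\lambda$ applied to the consecutive intervals $[0,s]$ and $[s,s+t]$ yields the superadditivity bound
\[
\log\bar{\bP}_\lambda\bigl(Z_{s+t}^{(\Delta)}(f)\ge (s+t)b\bigr)\ge\log\bar{\bP}_\lambda\bigl(Z_s^{(\Delta)}(f)\ge sb\bigr)+\log\bar{\bP}_\lambda\bigl(Z_t^{(\Delta)}(f)\ge tb\bigr)
\]
and its analogue for the lower tail. Fekete's lemma then produces the limits $I_+(b):=-\lim_{t\to\infty}t^{-1}\log\bar{\bP}_\lambda(Z_t^{(\Delta)}(f)\ge tb)$ for $b>m$ and $I_-(b)$ (defined analogously) for $b<m$, taking values in $[0,\infty]$. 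A refinement of the same FKG estimate using sub-intervals of lengths $\theta t$ and $(1-\theta)t$ with target levels $b_1,b_2$ shows that $I_+$ is convex (hence continuous) and non-decreasing on $(m,\infty)$, and $I_-$ is convex and non-increasing on $(-\infty,m)$. Corollary \ref{cor LDP} forces $I_\pm(b)>0$ for every $b\ne m$, while boundedness of $f$ gives $I_\pm\equiv+\infty$ outside $[\inf f,\sup f]$. Because $I_\pm(b)\to 0$ as $b\to m$ (a consequence of convexity together with the ergodic theorem for $\bar{\bP}_\lambda$), a short check with convex combinations shows that the function $I$ defined by $I_+$ on $(m,\infty)$, $I_-$ on $(-\infty,m)$ and $I(m):=0$ is convex on all of $\bR$, so $\psi:=-I$ is a concave function with values in $\{-\infty\}\cup(-\infty,0]$ and $\psi(x)<0$ precisely for $x\ne m$. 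The interval LDP now follows by standard sandwiching: for $[a,b]$ lying strictly on one side of $m$, monotonicity of $I_\pm$ away from $m$ implies $\sup_{[a,b]}\psi$ is attained at the endpoint closest to $m$, and the matching upper and lower bounds come from
\[
\bar{\bP}_\lambda\bigl(Z_t^{(\Delta)}(f)/t\ge a\bigr)-\bar{\bP}_\lambda\bigl(Z_t^{(\Delta)}(f)/t>b\bigr)\le \bar{\bP}_\lambda\bigl(Z_t^{(\Delta)}(f)/t\in[a,b]\bigr)\le \bar{\bP}_\lambda\bigl(Z_t^{(\Delta)}(f)/t\ge a\bigr)
\]
(and its lower-tail analogue); if $m\in[a,b]$, both the limit and $\sup_{[a,b]}\psi$ equal $0$.

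The main technical obstacle is the justification of the FKG inequality in the path-space setting, which is handled through the graphical construction as above: $\bar{\eta}$ is a coordinate-wise monotone function of a product of independent Poisson processes, and the positive correlations transfer through monotone functions. A secondary point is the convexity of the full rate function $I$ at the transition point $m$, which follows from a short elementary argument using the vanishing of $I_\pm$ at $m$ together with the monotonicity of the two branches.
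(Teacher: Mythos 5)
Your proposal is correct and follows essentially the same route the paper intends: the paper gives no detailed proof of Theorem \ref{thm LDP}, only the remark that it follows by combining Corollary \ref{cor LDP} with the subadditivity arguments of Lebowitz and Schonmann, and that FKG-superadditivity scheme is exactly what you carry out. The one step whose justification you should make explicit is the continuity of $I_{\pm}$ at $m=\bar{\nu}_{\lambda}(f)$, which requires, besides convexity and the ergodic theorem, finiteness of $I_{\pm}$ on a neighbourhood of $m$ (obtainable, for instance, by noting that $\bar{\mathbb{P}}_{\lambda}(Z_t^{(\Delta)}(f)=t\sup f)\geq \bar{\nu}_{\lambda}(\eta\equiv 1 \text{ on }\Delta)\,e^{-|\Delta|t}>0$, so the rate function is finite up to $\sup f$).
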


We remark that the large deviation estimates in Corollary \ref{cor LDP} hold without the assumption that $f$ is increasing, whereas this assumption is important to the analysis via subadditivity arguments in \cite{LebowitzSchonmannLDP1988}.

Other asymptotic properties of $Z_t^{(\Delta)}(f)$, such as the central limit theorem, have earlier been considered for the contact process on $\mathbb{Z}$; see \cite{SchonmannCLT1986} and \cite{GoldsteinWiroonsriCP2018}, Section 2.4.  These  works extends to the contact process on general countable-infinite graphs of bounded degree.

\begin{corollary}\label{prop CLT}
Let $\lambda>\lambda_c(\mathbb{N})$ and consider $f \colon \Omega_{\Delta} \rightarrow \mathbb{R}$ bounded. Then there exists $0\leq \sigma_f^2 < \infty$ such that 
\begin{equation}\label{eq CLT}
t^{1/2} \left[t^{-1} Z_t^{(\Delta)}(f) - \bar{\nu}_{\lambda}(f) \right] \rightarrow_L N(0,\sigma_f^2)\quad \text{as } t \rightarrow \infty,
\end{equation}
where $\rightarrow_L$ denotes convergence in law.
\end{corollary}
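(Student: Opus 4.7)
The plan is to reduce the continuous-time problem to a classical central limit theorem for stationary $\alpha$-mixing sequences via a block approximation in time. Replacing $f$ by $f - \bar{\nu}_\lambda(f)$, we may assume $\bar{\nu}_\lambda(f)=0$. By Proposition \ref{prop covariance}, the stationary covariance function
\[
c(s) := \int f(\omega_0)\,f(\omega_s)\,d\bar{\mathbb{P}}_\lambda(\omega)
\]
decays exponentially in $|s|$, so $\sigma_f^2 := \int_{-\infty}^{\infty} c(s)\,ds$ is finite and non-negative (e.g., as a limit of variances of normalised partial sums).

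For any fixed $\delta>0$, set $X_k^\delta := \int_{k\delta}^{(k+1)\delta} f(\xi_s^{(\Delta)})\,ds$, $k \in \mathbb{Z}$. The sequence $(X_k^\delta)_{k\in\mathbb{Z}}$ is stationary, uniformly bounded by $\delta \|f\|_\infty$, and since each $X_k^\delta$ is measurable with respect to the $\sigma$-algebra generated by $\xi^{(\Delta)}$ on $[k\delta,(k+1)\delta]$, stationarity together with Proposition \ref{prop covariance} bounds its $\alpha$-mixing coefficients by $\alpha_{X^\delta}(n) \leq C|\Delta|\, e^{-c(n-1)\delta}$ for $n \geq 1$. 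These are summable, so the classical Ibragimov--Rosenblatt CLT for bounded $\alpha$-mixing stationary sequences (see \cite{BradleyStrongMixing2005}) yields
\[
\frac{1}{\sqrt{n}}\sum_{k=0}^{n-1}X_k^\delta \rightarrow_L N(0,\sigma_\delta^2) \quad \text{as } n \to \infty,
\]
with $\sigma_\delta^2 = \Var(X_0^\delta) + 2\sum_{k\geq 1} \mathrm{Cov}(X_0^\delta, X_k^\delta)$. A short Fubini computation, justified by the exponential decay of $c$, telescopes the double integrals to
\[
\sigma_\delta^2 = \sum_{k\in \mathbb{Z}} \mathrm{Cov}(X_0^\delta, X_k^\delta) = \int_0^\delta du \int_{-\infty}^{\infty} c(s-u)\,ds = \delta \cdot \sigma_f^2.
\]

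To pass from the partial sums back to $Z_t^{(\Delta)}(f)$, write $t = n_t\delta + r_t$ with $n_t \in \mathbb{N}$ and $r_t \in [0,\delta)$; then $Z_t^{(\Delta)}(f) = \sum_{k=0}^{n_t-1} X_k^\delta + R_t^\delta$ with $|R_t^\delta|\leq \delta \|f\|_\infty$ almost surely, so $R_t^\delta/\sqrt{t}\to 0$ and
\[
t^{-1/2}\,Z_t^{(\Delta)}(f) = \delta^{-1/2}\cdot n_t^{-1/2}\sum_{k=0}^{n_t-1} X_k^\delta + o(1) \rightarrow_L N(0,\sigma_\delta^2/\delta) = N(0,\sigma_f^2),
\]
the target convergence. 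The degenerate case $\sigma_f^2 = 0$ corresponds to convergence in probability to $0$. The only point requiring care is applying the discrete-time mixing CLT with the right moment/mixing balance and verifying that the asymptotic variance is independent of the block size $\delta$; both are rendered routine by the exponential $\alpha$-mixing of Proposition \ref{prop covariance}, so that the stronger $\phi$-mixing property from Theorem \ref{thm phi mixing} is in fact not needed for this corollary.
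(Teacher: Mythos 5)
Your argument is correct and complete, but it takes a genuinely different route from the one the paper points to. The paper's proof is a reference to the strategy of \cite{SchonmannCLT1986}, which---as the discussion at the start of Section 2.1 here emphasises---combines \emph{two} properties of the contact process: positive association and fast decay of temporal correlations. There one discretises time into blocks much as you do, but then applies Newman's central limit theorem for stationary \emph{associated} sequences, whose only quantitative hypothesis is summability of the covariances (supplied by Proposition \ref{prop covariance}); general bounded $f$ is handled via a decomposition into increasing functions, and association is also what yields the complementary fact, noted after the corollary, that $\sigma_f^2>0$ for increasing nonconstant $f$. You instead discard association entirely and invoke the classical Ibragimov--Rosenblatt CLT for bounded stationary $\alpha$-mixing sequences with summable mixing coefficients, which Proposition \ref{prop covariance} provides with exponential room to spare; this treats arbitrary bounded $f$ directly and uses only one input, at the price of requiring an actual mixing rate rather than mere covariance summability. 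Your block decomposition, the identification $\sigma_\delta^2=\delta\sigma_f^2$ showing independence of the block size, and the passage from $n_t=\lfloor t/\delta\rfloor$ back to continuous time are all sound. Two details worth making explicit: deducing the exponential decay of $c(s)$ from the bound on \emph{events} in Proposition \ref{prop covariance} uses the standard covariance inequality $|\mathrm{Cov}(X,Y)|\le 4\Vert X\Vert_\infty\Vert Y\Vert_\infty\,\alpha$ for bounded variables, and in the degenerate case $\sigma_f^2=0$ one should note that $\Var(Z_t^{(\Delta)}(f))=o(t)$, so the normalised quantity tends to $0$ in $L^2$ and hence in law to the point mass at $0$.
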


The proof of Corollary \ref{prop CLT} follows by Proposition \ref{prop covariance} applied to the proof strategy outlined in \cite{SchonmannCLT1986}. In fact, Corollary \ref{prop CLT} is the generalisation of  Lemma 1 in  \cite{SchonmannCLT1986} to more general graphs and the proof of this lemma goes through after only minor modifications, replacing the last estimate with that of Proposition \ref{prop covariance}. 

We remark that Corollary \ref{prop CLT} can be further strengthened to the immediate extension of Theorem 1 in \cite{SchonmannCLT1986}. For this, the estimates in Theorem \ref{thm local shape theorem} are needed. Moreover, the proofs in \cite{GoldsteinWiroonsriCP2018} work in our generality as well and yield more quantitive bounds on the rate of convergence in \eqref{eq CLT} whenever $f$ is increasing (for which $\sigma_f^2>0$; see \cite{SchonmannCLT1986}).

As a last application, we mention the complete convergence theorem, which is a consequence of  Theorem \ref{thm local shape theorem}.
\begin{corollary}\label{cor CCT}
Let $\lambda>\lambda_c(\mathbb{N})$. Then, for any $\Lambda \subset V$, 
\begin{align}
\mathbb{P}_{\lambda}^{\Lambda} (\eta_t \in \cdot)
\implies \mathbb{P}_{\lambda}^{\Lambda} (\tau^{\Lambda} <\infty) \delta_{\underbar{0}}(\cdot) + \mathbb{P}_{\lambda}^{\Lambda}(\tau^{\Lambda}=\infty) \bar{\nu}_{\lambda}(\cdot), \text{ as } t \rightarrow \infty.
\end{align}
\end{corollary}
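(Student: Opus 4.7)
The plan is to combine Theorem \ref{thm local shape theorem} with self-duality of the contact process, a standard consequence of the graphical construction of Section \ref{sec GC}. It suffices to show that for every finite $B\subset V$,
\begin{equation}\label{eq prop cct}
\lim_{t\to\infty}\mathbb{P}_\lambda^\Lambda(\eta_t\cap B=\emptyset)=\mathbb{P}_\lambda^\Lambda(\tau^\Lambda<\infty)+\mathbb{P}_\lambda^\Lambda(\tau^\Lambda=\infty)\,\bar\nu_\lambda(\eta\cap B=\emptyset),
\end{equation}
since such cylindrical events form a $\pi$-system generating the product $\sigma$-algebra on $\Omega$. The classical duality identity $\bar\nu_\lambda(\eta\cap C=\emptyset)=\mathbb{P}_\lambda^C(\tau^C<\infty)$ for finite $C$, obtained by letting $t\to\infty$ in $\mathbb{P}_\lambda^V(\eta_t\cap C=\emptyset)=\mathbb{P}_\lambda^C(\tau^C\leq t)$, rewrites the right-hand side of \eqref{eq prop cct} as $1-\mathbb{P}_\lambda^\Lambda(\tau^\Lambda=\infty)\mathbb{P}_\lambda^B(\tau^B=\infty)$. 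Applying self-duality to the left-hand side yields $\mathbb{P}_\lambda^\Lambda(\eta_t\cap B=\emptyset)=\mathbb{P}_\lambda^B(\eta_t\cap\Lambda=\emptyset)$; decomposing over $\{\tau^B<\infty\}$ versus $\{\tau^B=\infty\}$ and using that $\eta_t^B$ is eventually $\underbar{0}$ on $\{\tau^B<\infty\}$, the entire claim reduces to proving
\begin{equation}\label{eq core cct}
\mathbb{P}_\lambda^B\bigl(\eta_t\cap\Lambda\neq\emptyset\,\big|\,\tau^B=\infty\bigr)\longrightarrow\mathbb{P}_\lambda^\Lambda(\tau^\Lambda=\infty)\quad\text{as }t\to\infty.
\end{equation}

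For the base case $B=\{y\}$ a single site and $\Lambda$ finite, Theorem \ref{thm local shape theorem} yields \eqref{eq core cct} with exponential rate: once $t$ exceeds $\max_{x\in\Lambda}\dist(y,x)/\theta$, the event $\{\eta_t\cap\Lambda\neq\emptyset\}$ lies in $\mathcal{F}_{\theta,t}(y)$, so
\begin{equation}
\bigl|\mathbb{P}_\lambda^y\bigl(\eta_t\cap\Lambda\neq\emptyset\,\big|\,\tau^y=\infty\bigr)-\bar\nu_\lambda(\eta\cap\Lambda\neq\emptyset)\bigr|\leq Ce^{-ct},
\end{equation}
and the duality identity identifies $\bar\nu_\lambda(\eta\cap\Lambda\neq\emptyset)$ with $\mathbb{P}_\lambda^\Lambda(\tau^\Lambda=\infty)$.

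The passage from the single-site case to general finite $B$ exploits that $\{\tau^B=\infty\}=\bigcup_{y\in B}\{\tau^y=\infty\}$ by the graphical construction, together with the monotonicity $\eta_t^B\supseteq\eta_t^y$ for $y\in B$. Following a restart argument in the spirit of \cite{DurrettGriffeathCPhighDim1982}, on $\{\tau^B=\infty\}$ one selects a canonical surviving ancestor $y\in B$ and applies Theorem \ref{thm local shape theorem} within its cone, while an inclusion--exclusion over $B$ together with the graphical coupling controls the overcounting. The extension to infinite $\Lambda$ follows by monotone exhaustion $\Lambda_n\uparrow\Lambda$, using that $\mathbb{P}_\lambda^{\Lambda_n}(\tau^{\Lambda_n}=\infty)\uparrow\mathbb{P}_\lambda^\Lambda(\tau^\Lambda=\infty)$ (by attractivity on the graphical space) and that the error terms from \eqref{eq core cct} are uniform in $n$ given the exponential rate in Theorem \ref{thm local shape theorem}. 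The main obstacle is precisely this restart step: since Theorem \ref{thm local shape theorem} is stated only for single-site starting configurations, one must carefully match the conditioning on $\{\tau^B=\infty\}$ with that on $\{\tau^y=\infty\}$ for the distinguished ancestor $y$, ensuring that the surviving descendants of the other sites of $B$ do not contribute infections at $\Lambda$ beyond those already captured by $\bar\nu_\lambda$.
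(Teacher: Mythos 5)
Your overall route is the right one and is essentially the one the paper itself intends: the paper gives no self-contained argument but defers to Liggett's proof, which runs exactly as you set it up (test against the events $\{\eta\cap B=\emptyset\}$ for finite $B$, apply self-duality, decompose over survival of the dual started from $B$, and feed in Theorem \ref{thm local shape theorem} for the surviving part). Your reduction to
\begin{equation}
\mathbb{P}_\lambda^B\bigl(\eta_t\cap\Lambda\neq\emptyset\,\big|\,\tau^B=\infty\bigr)\longrightarrow\mathbb{P}_\lambda^\Lambda(\tau^\Lambda=\infty), \quad t \to \infty,
\end{equation}
and your treatment of the single-site case $B=\{y\}$ are correct.

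The genuine gap is the passage from $B=\{y\}$ to general finite $B$. The mechanism you propose --- selecting a distinguished surviving ancestor $y\in B$ and applying Theorem \ref{thm local shape theorem} in its cone --- runs into precisely the conditioning mismatch you flag: the law of the graphical construction given that $y$ is the selected surviving site of $B$ is not the law given $\{\tau^y=\infty\}$, and no inclusion--exclusion over $B$ repairs this at the level of conditional measures. The way out is to avoid conditioning altogether and work with the \emph{unconditional} coupling estimate behind Theorem \ref{thm local shape theorem}, namely Proposition \ref{cor survival in D} in the form $\widehat{\mathbb{P}}_{\lambda}(\eta_t^x(z)\neq\eta_t^V(z),\,\tau^x=\infty)\leq Ce^{-ct}$ for $dist(x,z)\leq\alpha t$, combined with additivity of the graphical construction: $\eta_t^B=\bigcup_{x\in B}\eta_t^x$, whence $\{\tau^B=\infty\}=\bigcup_{x\in B}\{\tau^x=\infty\}$ and $\eta_t^x\leq\eta_t^B\leq\eta_t^V$. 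On $\{\eta_t^B(z)\neq\eta_t^V(z)\}\cap\{\tau^B=\infty\}$ every surviving $x\in B$ satisfies $\eta_t^x(z)\neq\eta_t^V(z)$, so a union bound over $x\in B$ gives $\widehat{\mathbb{P}}_{\lambda}(\eta_t^B(z)\neq\eta_t^V(z),\,\tau^B=\infty)\leq C|B|e^{-ct}$; adding the estimate $\widehat{\mathbb{P}}_{\lambda}(\eta_t^V(z)\neq\bar{\eta}_t(z))\leq Ce^{-ct}$ from the proof of Theorem \ref{thm local shape theorem} lets you replace $\eta_t^B$ by $\bar{\eta}_t$ on $\Lambda$ at exponentially small cost, and the resulting factorisation $\widehat{\mathbb{P}}_{\lambda}(\bar{\eta}_t\cap\Lambda=\emptyset,\,\tau^B=\infty)\to\bar{\nu}_\lambda(\eta\cap\Lambda=\emptyset)\,\mathbb{P}_\lambda^B(\tau^B=\infty)$ follows by splitting the graphical construction at time $t/2$ and using Lemma \ref{lem DG1} once more. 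With this substitution your argument closes; the remaining points (exhaustion for infinite $\Lambda$, where one should also note that $\mathbb{P}_\lambda^\Lambda(\tau^\Lambda=\infty)=1$ by restarting over infinitely many sites) are fine.
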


For a proof of Corollary \ref{cor CCT} based on Theorem \ref{thm local shape theorem} we refer to  \cite{LiggettIPS1985}, p.\ 284-287, where the proof for the case of the contact process on $\mathbb{Z}$ is given in much detail. The proof strategy therein applies after minor notational modifications only also to the contact process on general graphs as soon as the estimate in Theorem \ref{thm local shape theorem} is at hand. To the best of our knowledge, in this generality, the complete convergence theorem was first proven in \cite{SalzanoSchonmann1997}, Theorem $5$, by an elegant argument, however, not relying on any explicit mixing property. 

\section{Preliminaries}\label{sec prel}
In this section we recall the graphical construction of the contact process and some of its basic consequences. For a more thorough  description we refer to \cite{LiggettSIS1999}, Chapter 1.

\subsection{The graphical construction}\label{sec GC}

Let $G=(V,E)$ be a connected graph having bounded degree and fix $\lambda \in (0,\infty)$. 
 Assign  Poisson processes $N_x$ on $\mathbb{R}$ of rate $1$ to each individual $x \in V$ and Poisson processes $N_{(x,y)}$ on $\mathbb{R}$ of rate $\lambda$ to each ordered pair of individuals satisfying $dist(x,y)=1$. All these processes are taken independent of each other and together yield the following space-time picture on $V \times \mathbb{R}$.  
 
 To each event $t$ of $N_x$ draw a \emph{star} $\star$ at $(x,t)$ and to each event $s$ of $N_{(x,y)}$ draw an arrow $\rightarrow$ from $(x,s)$ to $(y,s)$. 
 The stars resembles the event that an individual becomes healthy and arrows resembles the possible spreading of an infection. From these Poisson processes we can construct the contact process using terminology from percolation theory. For $x,y \in V$ and $ s \leq t$, we say that $(x,s)$ is connected to $(y,t)$ by an \emph{active} path, written $(x,s) \rightarrow (y,t)$, if and only if there exists a directed path in $V \times \mathbb{R}$ starting at $(x,s)$, ending at $(y,t)$ and going either forwards in time without hitting crosses or ``sideways'' following arrows in the direction of the prescribed direction. Otherwise we write $(x,s) \nrightarrow (y,t)$. In general, for $\Lambda, \Delta \subset V \times \mathbb{R}$, we write $\Delta \rightarrow \Lambda$ ($\Delta \nrightarrow \Lambda$) if there is a (there is no) active path from $\Lambda$ to $\Delta$.  
 
 Now, for $\Lambda \subset V$ and $s\in \mathbb{R}$, the contact process $(\eta_t^{(\Lambda,s)})_{t\geq s}$ on $G$ with infection parameter $\lambda$  and satisfying $\eta_s(x) = 1 $ if and only if $x \in \Lambda$ can be defined by 
 \begin{equation}\label{def cp gp}
 \eta_t^{(\Lambda,s)}(x) = \left\{\begin{array}{cc}1, &  \text{ if } (\Lambda,s) \rightarrow (x,t); \\0, & \text{otherwise}.\end{array}\right.
 \end{equation}
 If $s=0$ we simply write $(\eta_t^{\Lambda})_{t\geq0}$ for the above defined process. It is well known that the process defined by \eqref{def cp gp}, known as the  \emph{graphical construction} of the contact process, has the same distribution as the process defined via the generator in \eqref{eq contact generator}. 

In many of the proofs in the following section it will be useful to consider the contact process evolving on certain subgraphs of $G$. For a graph $G_1=(V_1,E_1)$ with $V_1 \subset V$ and $E_1\subset E$, we write $(^{G_1}\eta_t^{(\Lambda,s)})_{t\geq s}$ for the contact process evolving on $\{0,1\}^{V_1}$ where in the graphical construction we only consider the events $(N_x)_{x\in V_1}$ and $(N_{(x,y)})_{(x,y)\in E_1}$. Further,  we introduce the notation
\begin{equation}
^{G_1}\tau^{(\Lambda,s)} := \inf \left\{ t\geq s \colon \Lambda \times \{s\} \nrightarrow (y,t) \text{ in } G_1 \text{ for any } y \in V_1\right\},
\end{equation}
which is the time until the contact process started at time $s$ with only the individuals inside $\Lambda$ initially infected reaches the state $\underbar{0}$ when restricting the graphical construction to $G_1$.

\subsection{Consequences of the graphical construction}\label{sec prel mono}

The graphical construction yields an elegant description of the upper stationary contact process:
 \begin{equation}\label{def uscp gp}
\bar{\eta}_t(x) := \left\{\begin{array}{cc}1, &  \text{ if }V \times \{-\infty \} \rightarrow (x,t); \\0, & \text{otherwise}.\end{array}\right.
\end{equation}
In \eqref{def uscp gp} and later, we denote by  $V \times \{-\infty \} \rightarrow (x,t)$ the event that  for all $s\leq t$ there exists an active path from $V \times \{s\}$ to $(x,t)$.

A further advantage of the graphical construction is that it yields a natural coupling of contact processes with different starting configurations  or infection parameters, denoted in the following by $\widehat{\mathbb{P}}_{\lambda}$. Moreover, from the construction it is easy to see that the contact process is \emph{monotone}, in the sense that, if $\Delta \subset \Lambda$, then $\widehat{\mathbb{P}}_{\lambda}$-a.s., 
$\eta_t^{\Delta}(x) \leq \eta_t^{\Lambda}(x)$ for all $x \in V$ and all $t \in [0,\infty)$. In fact, the contact process is \emph{positively associated}. In particular, for any two increasing events $A,B \in \mathcal{F}$, we have that 
\begin{equation}\label{eq positive associated}
\bar{\mathbb{P}}_{\lambda}(A \cap B) \geq \bar{\mathbb{P}}_{\lambda}(A) \bar{\mathbb{P}}_{\lambda}(B).
\end{equation}
Here, an event $A$ is said to be increasing if it has the property that when $(\omega_t) \in A$ then also $(\sigma_t) \in A$ whenever $\omega_t\leq \sigma_t$ for all $t$.  

Another very useful property is that the contact process is \emph{self-dual}, that is, for any $\Delta, \Lambda \in \mathcal{S}$, 
\begin{equation}
\mathbb{P}_{\lambda}^{\Delta} \left( \eta_t^{\Delta}(x)=1 \text{ for some } x \in \Lambda \right) = \mathbb{P}_{\lambda}^{\Lambda} \left( \eta_t^{\Lambda}(x)=1 \text{ for some } x \in \Delta \right).
\end{equation}
To see this from the graphical construction, first recall that $\{ \eta_t^{\Delta}(x)=1 \text{ for some } x \in \Lambda\} = \{ (\Delta,0) \rightarrow (\Lambda,t) \}$. This event again equals $\{ (\Delta,0) \leftarrow (\Lambda,t) \}$, where we write $\{ (\Delta,0) \leftarrow (\Lambda,t) \}$ if  there exists a directed \emph{backwards}-path in $V \times \mathbb{R}$ starting at $\Delta$, ending at $\Lambda$ and going either \emph{backwards} in time without hitting crosses or ``sideways'' following arrows in the \emph{opposite} direction of the prescribed direction. The latter event defines the \emph{dual process} $(\tilde{\eta}_t^{(\Lambda,s)})_{t \geq 0}$ given by
\begin{equation}
\tilde{\eta}_t^{(\Lambda,s)}(x) := \left\{\begin{array}{cc}1, &  \text{ if }(x,s-t) \leftarrow (\Lambda,s); \\0, & \text{otherwise},\end{array}\right.
\end{equation}
It is clear by the construction that $(\tilde{\eta}_t^{(\Lambda,s)})$ has the same distribution as $(\eta_t^{(\Lambda,0)})$ and hence that the contact process is self-dual.

The last property of the graphical construction we want to discuss is that of \emph{downward FKG} (abbreviated by dFKG), which is an even stronger correlation inequality than positive association introduced above. In particular, for each $\Lambda \in \mathcal{S}$ and any interval $[s_1,s_2] \in \mathbb{R}$, it says that
\begin{equation}\label{eq dFKG SD2}
\bar{\mathbb{P}}_{\lambda} \left(\cdot \mid \bar{\eta}_t(x) =0 \: \forall \: (x,t) \in \Lambda \times [s_1,s_2] \right)
\end{equation}
is positively associated.  This property was first proven in \cite{BergHaggstromKahn2006} for the upper invariant measures and in \cite{BergBethuelsenCP2018} it was noted that the proof in  \cite{BergHaggstromKahn2006} extends, again by use of the graphical construction, to the upper stationary contact process. 

The dFKG property is very useful when considering conditional measures since it implies that
\begin{equation}\label{eq dFKG SD}
\bar{\mathbb{P}}_{\lambda} \left( B \mid \bar{\eta}_t(x) =0 \: \forall \: (x,t) \in \Lambda \times [s_1,s_2] \right) \leq \bar{\mathbb{P}}_{\lambda}\left(B \mid A\right),
\end{equation}
where $B \in \mathcal{F}$ is any increasing event depending only on the contact process outside $\Lambda \times [s_1,s_2]$ and $A \in \mathcal{F}$ is any event depending only on the contact process within $\Lambda \times [s_1,s_2]$. In other words, the measure $\bar{\mathbb{P}}_{\lambda}(\cdot \mid A)$ \emph{stochastically dominates} the measure $\bar{\mathbb{P}}_{\lambda}( \cdot \mid \bar{\eta}_t(x) =0 \: \forall \: (x,t) \in \Lambda \times [s_1,s_2] )$. For a proof of this fact, see \cite{BergBethuelsenCP2018}.

To this end, we should emphasise that positive association may fail for other conditional measures of the contact process than that on the lefthand side of  \eqref{eq dFKG SD2}. 
For instance, \cite{LiggettCPnFKG1994} proved that this is the case for the upper invariant measure of the contact process on $\mathbb{Z}$ conditioned on having an infected individual at the origin. On the other hand, the dFKG property is known hold for a wide range of percolation-like models as shown in  \cite{BergHaggstromKahn2006}. (See also \cite{LiggettCPcor2006} and \cite{BergHaggstromKahnKonno2006} for further discussions on correlation inequalities for the contact process).

\section{Proofs}\label{sec proofs}

In this section we present the proofs of the statements given in Section \ref{sec mixing}. Detailed proofs of the statements in Section \ref{sec applications} are left to the reader. As standard, throughout this section, the numbers $C,c\in (0,\infty)$ represent constants whose value might change from line to line in the calculations, but which remain strictly bounded away from $0$ and $\infty$.

\subsection{Proof of Proposition \ref{prop covariance}}

Let $G=(V,E)$ be a countable-infinite and connected graph of degree bounded by $D \in \mathbb{N}$. Further, to each individual $y \in V$ we assign a self-avoiding nearest neighbour path $\gamma^{(y)}= (\gamma_0, \gamma_1,\gamma_2, \dots)$ in $V$. That is, the path $\gamma^{(y)}$ satisfies $\gamma_0=y$, $dist(\gamma_i, \gamma_{i+1})=1$ for each $i \geq0$, and $\gamma_i \neq \gamma_j$ for $i \neq j$.  We denote by $\Gamma^{(y)}$ the corresponding subgraph  with vertex set $V_1=\gamma^{(y)}$ and edge set $E_1:= \{ e\in E \colon e=(\gamma_{i-1},\gamma_{i}), i\in \mathbb{N} \}$. 

The following lemma is a very useful generalisation of Proposition 4 in \cite{DurrettGriffeathCPhighDim1982}, important to the proofs of Proposition \ref{prop covariance} and Theorem \ref{thm local shape theorem}.

\begin{lemma}\label{lem DG1}
Let $\lambda>\lambda_c(\mathbb{N})$. 
Then there are constants $C,c \in (0,\infty)$ such that, for all $y \in V$, 
\begin{align}\label{eq lem tau Z to V}
\mathbb{P}_{\lambda}^y \left( t <\tau^y < \infty \right) \leq Ce^{-ct}, \quad \forall \: t\geq0.
\end{align}
\end{lemma}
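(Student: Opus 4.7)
The plan is to reduce the statement to the corresponding exponential decay for the one-dimensional supercritical contact process via the embedded $\mathbb{N}$-subgraph $\Gamma^{(y)}$, and then lift the bound back to $G$ through a strong-Markov restart argument.

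First, I would invoke the one-dimensional estimate. Since $\lambda>\lambda_c(\mathbb{N})$, the restricted process $\tilde{\eta}^y := {}^{\Gamma^{(y)}}\eta^y$ on $\Gamma^{(y)}\cong\mathbb{N}$ is supercritical. By Proposition~4 of \cite{DurrettGriffeathCPhighDim1982} specialized to dimension one (which is covered since $\lambda_c(\mathbb{N})=\lambda_c(\mathbb{Z})$ by \cite{DurretGriffeathCP1983}),
\begin{equation*}
\mathbb{P}^y_\lambda\bigl(t < {}^{\Gamma^{(y)}}\tau^y < \infty\bigr) \leq Ce^{-ct}, \qquad \rho := \mathbb{P}^y_\lambda\bigl({}^{\Gamma^{(y)}}\tau^y < \infty\bigr) < 1.
\end{equation*}
Graphical monotonicity yields ${}^{\Gamma^{(y)}}\tau^y \leq \tau^y$ and $\{\tau^y<\infty\}\subseteq\{{}^{\Gamma^{(y)}}\tau^y<\infty\}$, but this alone does not control the gap $\tau^y - {}^{\Gamma^{(y)}}\tau^y$, which can be large if the infection escapes the path $\Gamma^{(y)}$.

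To close this gap I would iterate. Define $S_0:=0$ and, inductively while $\eta^y_{S_k}\neq\emptyset$, let $z_k:=\min\eta^y_{S_k}$ under a fixed well-ordering of $V$, let $\Gamma^{(z_k)}$ be the corresponding embedded $\mathbb{N}$-path, and set $S_{k+1}:=S_k+Y_k$, where $Y_k$ is the extinction time of the contact process on $\Gamma^{(z_k)}$ started from $z_k$ at time $S_k$ and driven only by the Poisson marks in $\Gamma^{(z_k)}\times[S_k,\infty)$. The strong Markov property together with the disjointness of the time intervals $[S_k,S_{k+1})$ makes the $Y_k$'s i.i.d.\ copies of ${}^{\Gamma^{(y)}}\tau^y$; monotonicity applied at each restart forces every $Y_k$ to be finite on $\{\tau^y<\infty\}$; and since $Y_k>0$ a.s.\ and $S_k\leq\tau^y$, the chain must terminate at some $K:=\inf\{k\colon\eta^y_{S_k}=\emptyset\}$, giving $\tau^y = S_K$.

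The conclusion would then follow from Chernoff. Conditioned on $\{Y_i<\infty\}$, the estimate of the first step yields an exponential tail for $Y_i$, so the conditional moment generating function $M(c'):=\mathbb{E}[e^{c'Y_i}\mid Y_i<\infty]$ is finite for small $c'>0$ and satisfies $M(c')\to 1$ as $c'\to 0^+$. A union bound over $K=k$ then yields
\begin{equation*}
\mathbb{P}^y_\lambda(\tau^y>t,\,\tau^y<\infty) \leq \sum_{k\geq 1}\mathbb{P}(S_k>t,\,Y_0,\ldots,Y_{k-1}<\infty) \leq \sum_{k\geq 1}\bigl(\rho\,M(c')\bigr)^k e^{-c't},
\end{equation*}
a convergent geometric sum as soon as $c'$ is chosen small enough that $\rho\,M(c')<1$. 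The main expected obstacle is the clean bookkeeping of this restart: checking that $z_k$ and $\Gamma^{(z_k)}$ are $\mathcal{F}_{S_k}$-measurable, that successive restricted processes use disjoint sets of Poisson marks (ensured by the disjointness of $[S_k,S_{k+1})$), and that $K<\infty$ on $\{\tau^y<\infty\}$ (which reduces to $\sum_k Y_k\leq\tau^y<\infty$ together with $\mathbb{E}[Y_i\mid Y_i<\infty]>0$, ruling out accumulation of restart times).
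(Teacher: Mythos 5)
Your proposal is correct and takes essentially the same route as the paper: the identical restart argument along embedded copies of $\mathbb{N}$, with the number of restarts dominated by a geometric random variable and the i.i.d.\ restart durations inheriting exponential tails from the one-dimensional Durrett--Griffeath estimate. The only differences are cosmetic --- you make the final Chernoff/geometric-series step explicit where the paper merely asserts that combining the two properties concludes the proof, and you cite the $d=1$ case of Proposition~4 of Durrett--Griffeath (1982) rather than Theorem~5 of their 1983 paper for the input estimate on $\mathbb{N}$.
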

\begin{proof}
This follows by exactly the same argument as in the proof of Proposition 4 in \cite{DurrettGriffeathCPhighDim1982} via a so-called restart argument. For completeness, we present the key ideas of the proof. Fix $t \in (0,\infty)$ and $y\in V$. Let $v_0=0$, and define iteratively $(v_k)_{k\geq1}$ as follows. If $\eta_{v_k}^y=\emptyset$, then we set $v_{k+1}=v_k$. Otherwise, if $\eta_{v_k}^y\neq\emptyset$, let $x_k$ be a randomly chosen vertex with the property that $\eta_{v_k}^y(x_k)=1$. Let $(\eta_{v_k +t}^{(k)})$ denote the truncated contact process started at time $v_k$ with only individual $x_k$ infected and using the graphical construction only within the subgraph  $\Gamma^{(x_k)}$. Consequently,
we have that $\eta_{v_k +t}^{(k)}\leq \eta_{v_k +t}^{y}$ for all $t\geq0$ a.s. Now, set $v_{k+1} = \inf \{ s\geq v_k \colon \eta_{v_k +s}^{(k)} = \emptyset \}$ and let 
\begin{equation}
k_0 := \inf \left\{ k\geq 0 \colon v_{k}=v_{k+1} \text{ or } v_{k+1}=\infty \right\}.
\end{equation}
Thus, since $\lambda >  \lambda_c(\mathbb{N})$, on each trial time $k$ there is a positive probability that $v_k=\infty$. In particular, $k_0$ is majorised by a geometrically distributed random variable and, moreover,  we have that 
\begin{equation}
\left\{ t < \tau^y < \infty\} = \{t < v_{k_0} < \infty\right\}.
\end{equation}
Further, from the graphical construction we have that, given $k_0=k$, the times $v_j-v_{j-1}$ for $1\leq j \leq k$ are i.i.d.  Moreover, they have exponential tails since \eqref{eq lem tau Z to V} holds  for the contact process on $\Gamma^{(y)}$ uniformly in $y$. Indeed, the contact process on $\Gamma^{(y)}$ is isomorphic to the contact process on $\mathbb{N}$ for which this property was  proven in \cite{DurretGriffeathCP1983}, Theorem 5. (The paper \cite{DurretGriffeathCP1983} treated the contact process on $\mathbb{Z}$, but their methods can easily be adapted to the process on $\mathbb{N}$, see the discussion on p.\ 6 in \cite{DurretGriffeathCP1983}). By combining these two properties, we conclude the proof of the lemma.
\end{proof}

\begin{corollary}\label{cor covariance}
Let $\Delta \in \mathcal{S}$. Then there are constants $C,c \in (0,\infty)$ such that,
\begin{equation}
\widehat{\mathbb{P}}_{\lambda} \left( A_{\Delta,0,t}  \right) \leq C|\Delta|e^{-ct}, \quad \forall \: t\geq0,
\end{equation}
where $A_{\Delta,0,t} := \{ (V\times \{0\}) \leftarrow (\Delta \times [t,\infty))\} \cap \{ (V\times \{-\infty\}) \leftarrow (\Delta \times [t,\infty))\}$.
\end{corollary}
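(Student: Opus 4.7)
I read the event $A_{\Delta,0,t}$ under the interpretation that the second factor is the \emph{complement} of its literal reading (i.e.\ the event that no backward path from $\Delta \times [t,\infty)$ extends to $V \times \{-\infty\}$), since otherwise the intersection would be trivially redundant: any infinite backward path passes through $V\times\{0\}$ on the way. Under this reading, $A_{\Delta,0,t}$ asks that some backward path starting in $\Delta \times [t,\infty)$ reaches $V \times \{0\}$ without extending all the way to $-\infty$. A union bound over $y \in \Delta$ reduces matters to the single-vertex statement $\hP_\lambda(A_{\{y\},0,t}) \leq Ce^{-ct}$, and I will work in the single-vertex case from here on.

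Let $\tilde{\tau}_{y,s}$ denote the backward survival duration of the dual contact process started from the space-time point $(y,s)$, and introduce the random set
\[
S_y := \bigl\{ s \geq t \colon s \leq \tilde{\tau}_{y,s} < \infty \bigr\},
\]
so that $A_{\{y\},0,t} \subseteq \{ S_y \neq \emptyset \}$. My first step is to control $\erwsymbol |S_y|$. By Fubini, the time-shift invariance of the graphical Poisson processes, and self-duality,
\[
\erwsymbol |S_y| = \int_t^\infty \hP_\lambda\bigl(s \leq \tilde{\tau}_{y,s} < \infty\bigr) \, ds = \int_t^\infty \bP_\lambda^y\bigl(s \leq \tau^y < \infty\bigr) \, ds,
\]
and Lemma \ref{lem DG1} bounds the integrand by $Ce^{-cs}$, giving $\erwsymbol |S_y| \leq (C/c)\, e^{-ct}$.

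The main obstacle---and second step---is to pass from this expected-measure bound to a genuine probability bound on $\{S_y \neq \emptyset\}$. I plan to decompose $S_y$ along the Poisson event times at $y$ that can disturb the backward path: stars at $y$ (rate $1$) together with arrows with head $y$ (combined rate at most $\mu := 1 + \lambda D$, where $D$ is the degree bound). Letting $(s_k)_{k \in \bZ}$ enumerate these event times, on each cell $[s_k, s_{k+1})$ the backward paths from different starting heights coalesce at $(y, s_k)$, so membership in $S_y$ is constant on the cell; the indicator $G_k := \ind\{[s_k, s_{k+1}) \subseteq S_y\}$ is then measurable with respect to events in $V \times (-\infty, s_k]$, while the cell length $X_k := s_{k+1} - s_k \sim \mathrm{Exp}(\mu)$ is independent of $G_k$ by the Poisson strong Markov property at $y$. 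Since $|S_y| \geq \sum_{k\colon s_k \geq t} G_k X_k$, Campbell's formula (or direct conditioning) yields $\hP_\lambda\bigl(\exists k \colon s_k \geq t \text{ and } G_k = 1\bigr) \leq \mu\, \erwsymbol |S_y|$, and the boundary cell containing $t$ contributes an additional term at most $\bP_\lambda^y(t \leq \tau^y < \infty) \leq Ce^{-ct}$ by Lemma \ref{lem DG1}. A final union bound over $y \in \Delta$ then delivers the stated estimate $C|\Delta|e^{-ct}$. The delicate point throughout is the independence-and-coalescence bookkeeping needed for $G_k \perp X_k$ and the separate handling of the boundary cell; beyond this, no new ingredient past Lemma \ref{lem DG1} enters.
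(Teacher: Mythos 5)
Your proof is correct, and your reading of $A_{\Delta,0,t}$ --- with the second factor complemented, so that the event asks for a backward path from $\Delta\times[t,\infty)$ that reaches $V\times\{0\}$ but dies before reaching $V\times\{-\infty\}$ --- is exactly the reading the paper's own proof works with and the one needed for the application in the proof of Proposition \ref{prop covariance}. Your argument is essentially the same as the paper's: both reduce, via self-duality, to the single-point estimate $\mathbb{P}_{\lambda}^y(s\leq\tau^y<\infty)\leq Ce^{-cs}$ of Lemma \ref{lem DG1} and then perform a Campbell-type first-moment bound over the Poisson ``restart points'' at the sites of $\Delta$; the paper enumerates these as the arrow events into $\Delta$ in unit time-slabs (plus the slice $\Delta\times\{t\}$), while your computation of $\mathbb{E}|S_y|$ followed by division by the mean cell length $1/\mu$ (plus the boundary cell at time $t$) is the same computation in slightly different, and somewhat more carefully justified, bookkeeping.
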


\begin{proof}
Together with the backward-paths started from $\Delta \times \{t\}$, it is sufficient to control the paths starting at the (randomly distributed) space-time points 
\begin{equation}
B_k:= \{ (x,N_{x,y}) \in \Delta \times [t+k,t+k+1) \colon dist(x,y)=1 \},\quad  k\geq 0.
\end{equation}
 Hence, by a union bound estimate, using the self-duality property and the independence structure of the graphical construction, we have that
\begin{align}
\widehat{\mathbb{P}}_{\lambda} \left( A_{\Delta,0,t}  \right) 
 &\leq Ce^{-ct}(|\Delta| + \sum_{k\geq 0}e^{-ck} \sum_{l\geq0} l  \widehat{\mathbb{P}}_{\lambda}(B_k=l) ) \leq C' |\Delta|e^{-c't}
\end{align}
for some constants $C'c,' \in (0,\infty)$, since  $ \sum_{l\geq0} l  \mathbb{P}(B_k=l) <\lambda |\Delta| D$ for any $k$, and since $\sum_{k\geq 0}e^{-ck}<\infty$.
\end{proof}

\begin{proof}[Proof of Proposition \ref{prop covariance}]
The proof follows by essentially the same argument as that for the proof of Lemma $2$ in \cite{SchonmannCLT1986}, where a slightly weaker property was shown. Indeed, \cite{SchonmannCLT1986} proved the inequality \eqref{eq covariance} for the contact process on $\mathbb{Z}$ in the particular case where the events $A\in \mathcal{F}_{\leq0}^{(\Delta}$ and $B\in \mathcal{F}_{\geq 0}^{(\Delta)}$ only  depend on the contact process at time $0$ and $t$, respectively.

For our extension, it is sufficient to consider events of the form 
\begin{align}
&A = \{ \eta_s(x) = \omega_s(x), (x,s) \in \Delta \times (-r_1,0] \};
\\&B = \{ \eta_s(x) = \omega_s(x), (x,s)\in \Delta \times [t,t+r_2) \},
\end{align}
where $r_1,r_2 \in (0,\infty)$ and $(\omega_s)_{s \in \mathbb{R}} \in D_{\Omega}(\mathbb{R})$. Then, letting 
 \begin{equation}
 \Delta_{B} := \{ (x,s) \in \Delta \times [t,t+r_2) \colon \omega_s(x)=1\},
 \end{equation}
and by replacing the definitions of $B'$ and $E'$ in the proof of Lemma $2$ in \cite{SchonmannCLT1986} by their natural generalisations, namely
\begin{align}
B' := \{ V \times \{-\infty\} \leftarrow \Delta_B \} \text{ and }
E:= \{ V \times \{0\} \leftarrow \Delta_B \},
\end{align}
we attain, by the same proof as that of Lemma $2$ in \cite{SchonmannCLT1986}, the inequality
\begin{align}
|\bar{\mathbb{P}}_{\lambda}(A\cap B) - \bar{\mathbb{P}}_{\lambda}(A) \bar{\mathbb{P}}_{\lambda}(B) |  &\leq \widehat{\mathbb{P}}_{\lambda} (E \cap (B')^c)
\\ &\leq  \widehat{\mathbb{P}}_{\lambda}(A_{\Delta,0,t}),
\end{align}
from which we conclude the proof by applying Corollary \ref{cor covariance}.
\end{proof}

\subsection{Proof of Theorem \ref{thm local shape theorem}}
For the proof of Theorem \ref{thm local shape theorem} we follow  the idea of the proofs of Proposition $5$ and $6$ in \cite{DurrettGriffeathCPhighDim1982}. These propositions yield slightly stronger statements for the contact process on $\mathbb{Z}^d$, $d\geq1$, under the same assumption that $\lambda>\lambda_c(\mathbb{N})$. The proofs in \cite{DurrettGriffeathCPhighDim1982} use geometrical properties of $\mathbb{Z}^d$, $d\geq1$, and the argument therefore needs to be adapted in order to work for general graphs of bounded degree. (In fact, in \cite{DurrettGriffeathCPhighDim1982} only a detailed proof of the case $d=2$ is given). 

One important estimate for the proofs in \cite{DurrettGriffeathCPhighDim1982}, and for the proof of Theorem \ref{thm local shape theorem} below, is the following estimate for the contact process on $\mathbb{N}$. 

\begin{lemma}\label{lem estimates CP on N}
Consider the contact process on $\mathbb{N}$ with $\lambda>\lambda_c(\mathbb{N})$. Then there are constants $C,c,\alpha \in (0,\infty)$ such that, 
\begin{equation}
\mathbb{P}_{\lambda}^0 \left(t(x) >t  \mid \tau^0=\infty\right) \leq Ce^{-ct}, \quad  \forall \: t>0,
\end{equation}
 whenever $dist(0,x) \leq \alpha t$.
\end{lemma}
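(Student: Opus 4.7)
The plan is to reduce the statement to the well-known exponential concentration of the right edge of the supercritical one-dimensional contact process, combined with a purely geometric observation specific to $\mathbb{N}$: every nearest-neighbour active path starting at the origin and reaching a site $y \geq x$ must visit $x$ at some intermediate time.

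First, I would introduce the rightmost infected site $r_s := \sup\{x \in \mathbb{N} : \eta_s^0(x) = 1\}$ (with $\sup \emptyset = -\infty$) and denote by $v = v(\lambda)$ the right edge speed, characterised as the $\mathbb{P}_\lambda^0(\cdot \mid \tau^0 = \infty)$-a.s.\ limit of $r_s/s$ as $s \to \infty$. Since $\lambda > \lambda_c(\mathbb{N})$, classical results yield $v > 0$ together with the exponential large deviation estimate
$$\mathbb{P}_\lambda^0\bigl(r_t \leq \alpha t,\, \tau^0 = \infty\bigr) \leq C e^{-ct}, \quad \forall\, t \geq 0,$$
for every $\alpha \in (0, v)$ and suitable $C,c > 0$. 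For the contact process on $\mathbb{Z}$ this goes back to Theorem 5 of \cite{DurretGriffeathCP1983}, and, as noted on p.\ 6 there, the argument transfers directly to the process on $\mathbb{N}$. Its proof proceeds via a restart argument very much in the spirit of Lemma \ref{lem DG1}, alternating blocks on which the process is witnessed to survive with blocks on which the rightmost edge makes good linear progress.

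Next, I would observe the inclusion $\{t(x) > t\} \subseteq \{r_t < x\}$. Indeed, if $\eta_s^0(y) = 1$ for some $y \geq x$ and some $s \leq t$, then the graphical construction provides an active path from $(0,0)$ to $(y,s)$, and since such a path is a nearest-neighbour directed path in $\mathbb{N} \times [0, s]$, it must visit the vertex $x$ at some intermediate time $s'' \leq s$, giving $t(x) \leq s'' \leq t$. Contrapositively, $t(x) > t$ forces $r_s < x$ for every $s \leq t$, hence in particular $r_t < x$.

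Fixing any $\alpha \in (0, v)$, for every $x \in \mathbb{N}$ with $dist(0,x) \leq \alpha t$ we then obtain
$$\mathbb{P}_\lambda^0\bigl(t(x) > t,\, \tau^0 = \infty\bigr) \leq \mathbb{P}_\lambda^0\bigl(r_t \leq \alpha t,\, \tau^0 = \infty\bigr) \leq C e^{-ct},$$
and dividing by $\mathbb{P}_\lambda^0(\tau^0 = \infty) \in (0,1]$ yields the lemma. The main (and essentially only) obstacle is the exponential large deviation estimate for $r_t$ recalled above; the rest of the argument is soft and exploits nothing beyond the one-dimensionality of the underlying graph.
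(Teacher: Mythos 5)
Your proposal is correct and follows essentially the same route as the paper: both reduce the claim to the Durrett--Griffeath exponential large deviation estimate for the right edge $r_t$ of the supercritical one-dimensional process, via the observation that on $\mathbb{N}$ the event $\{t(x)>t\}$ forces $r_t<x\leq\alpha t$. The only (immaterial) difference is in how the conditioning on $\{\tau^0=\infty\}$ is removed --- you divide by the constant survival probability, while the paper passes to the process started from $(-\infty,0]$ and invokes positive association.
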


\begin{proof}
In  \cite{DurrettGriffeathCPhighDim1982} it is referred to \cite{DurretGriffeathCP1983} for a proof. That paper, however,  concerns the contact process on $\mathbb{Z}$ for which they prove the analog of Lemma \ref{lem estimates CP on N}, see Theorem 4 therein. To be precise, Theorem 4 in \cite{DurretGriffeathCP1983} states that there are constants $C,c,\alpha \in (0,\infty)$ such that
\begin{equation}
\mathbb{P}_{\lambda}^{(-\infty,0]} (r_t <\alpha t) \leq Ce^{-ct}, \quad t\geq0.
\end{equation}
Here $r_t$ denotes the position of the rightmost infected individual at time $t$. Letting $x \leq \alpha t$, we have that
\begin{align}\label{eq estimates CP on N help x}
\mathbb{P}_{\lambda}^0 \left(t(x) >t  \mid \: ^{\mathbb{N}}\tau^0=\infty\right) 
&= \mathbb{P}_{\lambda}^{0} \left(r_t > \alpha t  \mid \: ^{\mathbb{N}}\tau^0=\infty\right) 
\\&\leq \mathbb{P}_{\lambda}^{(-\infty,0]} \left( r_{t} < \alpha t  \mid \: ^{\mathbb{N}}\tau^0=\infty \right) 
\\ &\leq \mathbb{P}_{\lambda}^{(-\infty,0]} (r_{t} <\alpha t),
\end{align}
where in the last line we  have used that $\mathbb{P}_{\lambda}^{(-\infty,0]}$ is positively associated, noting that $\{ \: ^{\mathbb{N}}\tau^0=\infty \}$ and $\{ r_t <\alpha t\}^c $ are increasing events. This derivation is formally with respect to the contact process on $\mathbb{Z}$, however, by the graphical construction coupling the estimate immediately transfers to the contact process on $\mathbb{N}$.
\end{proof}

From Lemma \ref{lem DG1} and Lemma \ref{lem estimates CP on N} we derive the following proposition for the contact process on a general graph.

\begin{proposition}\label{cor survival in D}
Let $\lambda>\lambda_c(\mathbb{N})$. Then there are constants $\alpha, C,c \in (0,\infty)$ such that,
\begin{equation}\label{eq thm mixing help2}
\widehat{\mathbb{P}}_{\lambda} \left(\eta_t^x(y) \neq \eta_t^V(y) \mid \tau^x=\infty \right) \leq Ce^{-ct}
\end{equation}
 for any $x,y \in V$ satisfying $dist(x,y) \leq \alpha t$.
\end{proposition}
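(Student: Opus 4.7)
\emph{Step 1 (Reduction).} Monotonicity of the graphical construction gives $\eta_t^x(y) \leq \eta_t^V(y)$ almost surely, so $\{\eta_t^x(y) \neq \eta_t^V(y)\} \subseteq \{\eta_t^x(y) = 0\}$. Further, by embedding $\mathbb{N}$ at $x$ and using $\lambda > \lambda_c(\mathbb{N})$, one has $\widehat{\mathbb{P}}_{\lambda}(\tau^x = \infty) \geq p_{\mathbb{N}} > 0$ uniformly in $x$. It therefore suffices to bound
\[
\widehat{\mathbb{P}}_{\lambda}\!\left(\eta_t^x(y) = 0,\, \tau^x = \infty\right) \leq Ce^{-ct}
\]
for all $y \in V$ with $\dist(x,y) \leq \alpha t$, where $\alpha, C, c > 0$ will be chosen.

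\emph{Step 2 (Restart adapted to target $y$).} The plan is to re-run the restart construction from the proof of Lemma \ref{lem DG1} along embedded paths tailored to reach $y$. For each $z \in V$, fix a self-avoiding infinite path $\gamma^{(z,y)}$ in $G$ starting at $z$ and containing $y$, obtained by concatenating a shortest path from $z$ to $y$ with a self-avoiding extension beyond $y$ (available since $G$ is infinite, connected and of bounded degree; a minor routing through an adjacent vertex is needed in pathological cases such as leaves). Denote the corresponding subgraph by $\Gamma^{(z,y)}$. Define $(v_k, x_k, \eta^{(k)})_{k\geq 0}$ exactly as in the proof of Lemma \ref{lem DG1}, but with $\Gamma^{(x_k,y)}$ in the role of $\Gamma^{(x_k)}$. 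On $\{\tau^x = \infty\}$ the vertex $x_{k+1} \in \eta_{v_{k+1}}^x$ can always be chosen, and since each $\Gamma^{(x_k,y)} \cong \mathbb{N}$, the identical argument shows that $k_0 := \inf\{k : v_{k+1} = \infty\}$ is stochastically dominated by a geometric random variable with success probability $\geq p_{\mathbb{N}}$, while each increment $v_{j+1} - v_j$ has exponential tails via Lemma \ref{lem DG1} applied on $\mathbb{N}$. Consequently $v_{k_0}$ has exponential tails. A standard light-cone bound for the contact process of bounded degree additionally yields $\dist(x, x_{k_0}) \leq M v_{k_0}$ off an event of probability $\leq Ce^{-c v_{k_0}}$, for some constant $M = M(\lambda, D)$.

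\emph{Step 3 (Reaching $y$ and conclusion).} On $\{v_{k_0 + 1} = \infty\}$, $\eta^{(k_0)}$ is a supercritical contact process on $\mathbb{N}$ that survives forever, and $y$ lies on $\gamma^{(x_{k_0},y)}$ at graph-distance $\dist(x_{k_0}, y) \leq \dist(x, y) + \dist(x, x_{k_0}) \leq \alpha t + M v_{k_0}$. Applying Lemma \ref{lem estimates CP on N} to $\eta^{(k_0)}$, with spread-speed constant $\alpha^\ast > 0$ therein, yields exponential tails on the hitting time of $y$ by $\eta^{(k_0)}$ measured from $v_{k_0}$. Choosing $\alpha$ sufficiently small (so that $\alpha + M\varepsilon < \alpha^\ast$ for some fixed $\varepsilon > 0$) and intersecting with $\{v_{k_0} \leq \varepsilon t\}$ shows that $\eta^{(k_0)}$ reaches $y$ by time $t$ outside an event of probability $\leq Ce^{-ct}$. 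By monotonicity, on this event $\eta_t^x(y) \geq {}^{\Gamma^{(x_{k_0},y)}}\eta_t^{x_{k_0}}(y) = 1$, which contradicts $\eta_t^x(y) = 0$ and closes the argument after dividing by the uniform lower bound $p_{\mathbb{N}}$ on $\widehat{\mathbb{P}}_{\lambda}(\tau^x = \infty)$. The main obstacle is exactly this final balancing step: the constant $\alpha$ in the proposition must be chosen small enough to absorb the additional distance $M v_{k_0}$ introduced by the restart into the spread-speed constant $\alpha^\ast$ of Lemma \ref{lem estimates CP on N}, uniformly in $y$.
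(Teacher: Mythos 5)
There is a genuine gap, and it appears already in Step 1. By monotonicity the discrepancy event is exactly $\{\eta_t^V(y)=1,\ \eta_t^x(y)=0\}$; by keeping only the inclusion into $\{\eta_t^x(y)=0\}$ you discard the constraint $\eta_t^V(y)=1$, and the bound you then set out to prove, $\widehat{\mathbb{P}}_{\lambda}(\eta_t^x(y)=0,\,\tau^x=\infty)\leq Ce^{-ct}$, is false: since recoveries occur at rate $1$, the upper invariant measure has a positive density of healthy sites (in stationarity $\bar{\nu}_{\lambda}(\eta(y)=1)\leq \lambda D\,\bar{\nu}_{\lambda}(\eta(y)=0)$, so $\bar{\nu}_{\lambda}(\eta(y)=0)\geq (1+\lambda D)^{-1}>0$), and by complete convergence the left-hand side tends to $\widehat{\mathbb{P}}_{\lambda}(\tau^x=\infty)\,\bar{\nu}_{\lambda}(\eta(y)=0)>0$. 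The same confusion resurfaces in Step 3: Lemma \ref{lem estimates CP on N} controls the first hitting time $t(y)$, and $t(y)\leq t$ does not give $\eta_t^x(y)=1$, because $y$ may recover between its first infection and time $t$; so the claimed contradiction with $\eta_t^x(y)=0$ does not follow. No choice of $\alpha$ can repair this, since the quantity you are bounding simply does not decay.

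The missing idea is self-duality. The paper writes the discrepancy event as: the dual process started from $(y,t)$ reaches time $0$ (this encodes $\eta_t^V(y)=1$), the forward process from $(x,0)$ survives, yet no active path joins $(x,0)$ to $(y,t)$. It then splits according to whether the dual from $(y,t)$ eventually dies --- exponentially unlikely by Lemma \ref{lem DG1} and self-duality --- or survives forever, in which case both the forward cluster of $x$ and the backward dual of $(y,t)$ quickly restart onto embedded copies of $\mathbb{N}$ that are routed toward a common path $\Gamma^{(y)}$, and the speed estimate of Lemma \ref{lem estimates CP on N} forces the forward and backward infection paths to intersect inside $[0,t]$ with probability $1-Ce^{-ct}$, producing an active path $(x,0)\rightarrow(y,t)$ and hence a contradiction. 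Your restart construction along paths routed through $y$ is essentially the right tool for the forward half of that argument, but without running the dual process backward from $(y,t)$ and arguing that the two clusters meet, the proof cannot close.
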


\begin{proof}
Let $x,y\in V$ satisfy $dist(x,y) \leq \alpha_0 t$ for some $\alpha_0 \in (0,\infty)$ to be determined. Note that the event 
$A:= \{\eta_t^x(y) \neq \eta_t^V(y), \tau^x=\infty\}$ equals, by the graphical construction in Section \ref{sec GC}, the intersection of the following three events:
\begin{align}
&\left\{ \exists \text{ active path from } V\times \{0\} \text{ to } (y,t)\right\}
\\ &\left\{\exists \text{ active path from } (x,0) \text{ to } V\times \{s\} \: \forall \: s >0\right\} 
\\ &\left\{\nexists \text{ active path from } (x,0) \text{ to } (y,t) \right\}.
\end{align}
From this we see that $A$ is contained in the union of $A_1$ and $A_2$;
\begin{align}\label{eq event dual bound}
&A_t^{(1)}:=\left\{ \hat{\eta}_t^{(y,t)} \neq \underbar{0}, \hat{\eta}_s^{(y,t)} = \underbar{0} \text{ for some } s > t  \right\} 
\\ \label{eq event dual bound2} &A_t^{(2)}:=\left\{ \tau^x=\infty, \hat{\eta}_s^{(y,t)} \neq \underbar{0} \: \forall \: s>0, \eta_s^x \hat{\eta}_{t-s}^{(y,t)}=\underbar{0} \: \forall \: s \in [0,t]\right\},
\end{align}
where we recall that $(\hat{\eta}_s^{(y,t)})_{s\geq0}$ denotes the dual process started from time $t$ with only the individual $y$ initially infected, and $\eta_s^x \hat{\eta}_{t-s}^{(y,t)}$ denotes the configuration obtain by multiplying the corresponding values of the individuals, i.e.\ $\eta_s^x \hat{\eta}_{t-s}^{(y,t)}(z) = \eta_s^x(z) \hat{\eta}_{t-s}^{(y,t)}(z)$, $z \in V$.

By Lemma \ref{lem DG1} and the self-duality property of the contact process, the probability of $A_t^{(1)}$ is bounded by $Ce^{-ct}$. The rest of the proof is devoted to showing that a similar bound holds for $A_t^{(2)}$. For this, choose the self-avoiding paths $(\gamma^{(z)})_{z \in V}$ introduced in the previous subsection to be the concatenation of a shortest path from $z$ to the graph $\Gamma^{(y)}$, say with endpoint $y_l \in \gamma^{(y)}$, and the path $(y_l,y_{l+1},\dots)$. As in the proof of Lemma \ref{lem DG1}, it follows that $\mathbb{P}_{\lambda}^x \left( T >  \epsilon t \mid \tau^x=\infty\right)$ decays exponentially in $t$, where $\epsilon \in (0,1)$ and 
\begin{equation}
T := \inf \left\{ t \geq 0 \colon \eta_t^x(z)=1 \text{ and } ^{\Gamma^{(z)}}\tau_{t}^z= \infty \text{ for some } z \in V \right\}
\end{equation}
denotes the first time that the contact process started from only $x$ infected spreads its infection within one of the subgraphs $(\Gamma^{(z)})_{z\in V}$. By a comparison with a continuous-time branching process with branching rate $D \lambda $ it furthermore follows that, for $\delta$ sufficiently large, the probability
\begin{equation}
\mathbb{P}_{\lambda}^x\left( \exists  \: z\in V, dist(z,x) \geq \delta t \colon \eta_{t}(z)=1\right)
\end{equation}
decays exponentially in $t$. Hence, we may assume that there is a $z\in V$ within distance $\delta \epsilon t$ of $x$ satisfying $\eta_T(z)=1$ and $^{\Gamma^{(z)}}\tau_{T}^z= \infty$.

By self-duality, the above argumentation also applies to the dual process $(\hat{\eta}_s^{(y,t)})$ and yields that, with a probability exponentially close to $1$ in $t$, at a time $\hat{T} \leq \epsilon t$, there is a vertex $w$ at distance at most $\delta \epsilon t$ from $y$ and satisfying that $(y,t) \rightarrow (z,t-\hat{T})$ and that this infection is spread (backwards in time) within the subgraph $\Gamma^{(w)}$. 

Finally, in order to conclude the proof, it is sufficient to control the event that the infection paths from $(z,T)$ (forward in time) intersects with the (backwards-)paths from $(w,t-\hat{T})$ in the time-interval $[T,t-\hat{T}]$. After a bit of thought, it is not difficult to see that this happens at a probability bounded from below by $1-Ce^{-ct}$ for some constants $C,c\in (0,\infty)$. Indeed, due to the particular construction of the paths $(\gamma^{(z)})_{z \in V}$, we may apply  the estimate in \eqref{eq estimates CP on N help x} to control the spread of infections from $(z,T)$ and $(w,t-\hat{T})$. Moreover, we can control the probability of intersecting infection paths by noting that, under the above analysis, we have  $dist(z,y) \leq \alpha_0 t + \delta \epsilon t$ and $dist(w,y) \leq \delta \epsilon t$ and by taking the constants $\epsilon$ and $\alpha_0$ sufficiently small. From this we conclude the proof of the proposition.
\end{proof}

\begin{proof}[Proof of Theorem \ref{thm local shape theorem}]
From the graphical construction, and by self-duality of the contact process, we have that, for any $y \in V$,
\begin{equation}\label{eq thm mixing help1}
\widehat{\mathbb{P}}_{\lambda} \left( \eta_t^{V} (y) \neq \bar{\eta}_t(y) \right) = \mathbb{P}_{\lambda}^y\left(t < \tau^y < \infty \right) \leq Ce^{-ct}, \quad \forall \: t >0,
\end{equation}
where the inequality follows by Lemma \ref{lem DG1}. Hence, by combining this with Proposition \ref{cor survival in D}, we obtain, by use of the metric inequality, that for any $x \in V$,
\begin{equation}
\widehat{\mathbb{P}}_{\lambda} \left( \eta_t^{x} (y) \neq \bar{\eta}_t(y) \mid \tau^x =\infty \right) \leq Ce^{-ct}, \quad \forall \: t>0,
\end{equation}
whenever $dist(y,x) \leq \alpha t.$  In almost exactly the same manner as in the proof of Corollary \ref{cor covariance}, we conclude from this that in fact
\begin{equation}\label{eq thm local shape help}
\widehat{\mathbb{P}}_{\lambda} \left( \eta_s^{x} (y) \neq \bar{\eta}_s(y) \text{ for some } s\in [t,t+1)  \mid \tau^x =\infty \right) \leq Ce^{-ct},
\end{equation}
since, by the graphical construction, it is sufficient to consider the times in the time interval $[t, t + 1)$ at which there is an arrow event towards $y$.
Now, let $\alpha_0 \in (0,\alpha)$ and consider an event $B \in \mathcal{F}_{\alpha_0,t}(x)$. We have that  
\begin{align}
&\left|\mathbb{P}_{\lambda}^x (B \mid \tau^x=\infty ) - \bar{\mathbb{P}}_{\lambda}(B) \right| 
\\ &\leq \widehat{\mathbb{P}} \left(\eta_s^x(y)\neq \bar{\eta}_s(y) \text{ for some }(y,s)  \in C_{\alpha_0 t}(x) \mid \tau^x =\infty\right)
\\ &\leq \sum_{l=\lfloor t \rfloor }^{\infty} \sum_{y \in \Delta_{l,\alpha_0}(x)} \widehat{\mathbb{P}}_{\lambda} \left( \eta_s^{x} (y) \neq \bar{\eta}_s(y), s\in [l,l+1)  \mid \tau^x =\infty \right),
\end{align}
where $\Delta_{l,\alpha_0}(x) := \{ y \colon dist(y,x) \leq \alpha_0 (l+1)\}$. In particular, by \eqref{eq thm local shape help} the latter sum is bounded by 
$\sum_{l=\lfloor t \rfloor }^{\infty} D^{\alpha_0 (l+1)} Ce^{-cl}$, 
which, for $\alpha_0$ close to $0$, decays exponentially in $t$. This completes the proof of the theorem.
\end{proof}

\subsection{Proof of Theorem \ref{thm phi mixing}}

From Lemma \ref{lem DG1} above, and by use of similar arguments as in the proof of  \cite{BergBethuelsenCP2018}, Theorem 1.3, we derive the following lemma.

\begin{lemma}\label{lem domination200}
Let $\Delta \in \mathcal{S}$. Then there is $\delta>0$ and $x \in V$ such that
\begin{align}\label{lem domi12345}
\inf_{A_0 \in \mathcal{F}_{\leq0}^{\Delta}} \bar{\mathbb{P}}_{\lambda} \left(\bar{\eta}_0(x)=1 \mid A_0 \right) \geq \delta.
\end{align}
\end{lemma}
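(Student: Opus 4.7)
The plan is to combine the downward FKG inequality~\eqref{eq dFKG SD} with a restart argument along an infinite sub-path of $V \setminus \Delta$, in the spirit of Lemma~\ref{lem DG1}. Since $G$ is connected, infinite, and of bounded degree while $\Delta$ is finite, I first choose a vertex $x \in V \setminus \Delta$ together with an infinite self-avoiding nearest-neighbor path $\gamma^{(x)} = (x, \gamma_1, \gamma_2, \ldots)$ lying entirely in $V \setminus \Delta$, and let $\Gamma^{(x)}$ denote the corresponding subgraph (isomorphic to $\mathbb{N}$). Crucially, $\{\bar{\eta}_0(x) = 1\}$ is an increasing event that, since $x \notin \Delta$, depends on the contact process outside $\Delta \times (-\infty, 0)$, so that~\eqref{eq dFKG SD} is applicable with this event in the role of $B$.

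Any $A_0 \in \mathcal{F}_{\leq 0}^{\Delta}$ with $\bar{\mathbb{P}}_{\lambda}(A_0) > 0$ can be approximated by events supported on a finite window $\Delta \times [-T, 0)$. Applying~\eqref{eq dFKG SD} with $\Lambda = \Delta$, $[s_1,s_2] = [-T, 0)$ then yields
\begin{equation*}
\bar{\mathbb{P}}_{\lambda}\bigl(\bar{\eta}_0(x) = 1 \,\bigm|\, A_0\bigr) \;\geq\; \bar{\mathbb{P}}_{\lambda}\bigl(\bar{\eta}_0(x) = 1 \,\bigm|\, E_T\bigr),
\end{equation*}
where $E_T := \{\bar{\eta}_t(y) = 0 \text{ for every } (y,t) \in \Delta \times [-T, 0)\}$. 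Letting $T \to \infty$ exhausts $\mathcal{F}_{\leq 0}^{\Delta}$, so it remains to show that the right-hand side is bounded below by some $\delta > 0$ uniformly in $T$. For this uniform lower bound I would use a restart argument restricted to $\Gamma^{(x)}$: since $\Gamma^{(x)} \simeq \mathbb{N}$ and $\lambda > \lambda_c(\mathbb{N})$, the contact process defined by the graphical-construction events internal to $\Gamma^{(x)}$ is supercritical, and combining Lemma~\ref{lem DG1} on $\Gamma^{(x)}$ with the complete convergence theorem for the contact process on $\mathbb{N}$ produces $\delta_0, T_0 > 0$ such that this restricted process, started from only $(x, -T)$ infected, has $x$ infected at time $0$ with probability at least $\delta_0$ for all $T \geq T_0$. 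By monotonicity of the graphical construction this restricted event implies $\{\bar{\eta}_0(x) = 1\}$.

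The hard part will be that the restart-survival event inside $\Gamma^{(x)}$, although it only uses internal arrows and crosses of $\Gamma^{(x)}$, is not independent of $E_T$: infections travelling along $\Gamma^{(x)}$ may exit via boundary arrows and eventually reach $\Delta$, and a naive FKG inequality only gives the wrong-side bound here. I expect to resolve this in the manner of~\cite{BergBethuelsenCP2018} by first conditioning on the (independent) graphical-construction events outside $\Gamma^{(x)}$ and then arguing that the conditional survival probability within $\Gamma^{(x)}$ stays bounded below by a positive constant uniformly in $T$, using that forcing a finite set $\Delta$ to remain empty has only a local effect on the underlying Poisson events while $\Gamma^{(x)}$ extends to infinity inside $V \setminus \Delta$.
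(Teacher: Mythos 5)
Your first step---using \eqref{eq dFKG SD} to reduce the infimum over $A_0 \in \mathcal{F}_{\leq 0}^{\Delta}$ to the single ``all healthy on $\Delta$ throughout the past'' conditioning $E_T$---is exactly what the paper does and is fine. The gap is in the second step, precisely where you flag the ``hard part'': you never actually obtain the uniform-in-$T$ lower bound on $\bar{\mathbb{P}}_{\lambda}(\bar{\eta}_0(x)=1 \mid E_T)$. Two concrete problems. First, your restart construction needs a seed: an active path from $(x,-T)$ to $(x,0)$ inside $\Gamma^{(x)}$ only yields $\bar{\eta}_0(x)=1$ if also $\bar{\eta}_{-T}(x)=1$, and lower-bounding $\bar{\mathbb{P}}_{\lambda}(\bar{\eta}_{-T}(x)=1 \mid E_T)$ uniformly in $T$ is essentially the statement you are trying to prove (FKG only gives $\bar{\mathbb{P}}_{\lambda}(\bar{\eta}_{-T}(x)=1\mid E_T) \le \bar{\nu}_{\lambda}(\eta(x)=1)$, the wrong direction). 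Second, conditioning on the graphical-construction events outside $\Gamma^{(x)}$ does not decouple anything: $E_T$ is an event about the \emph{process} $\bar{\eta}$ and hence depends jointly on the Poisson events inside and outside $\Gamma^{(x)}$ (infection paths enter and leave $\Gamma^{(x)}$ and can reach $\Delta$), so even after that conditioning the law of the internal Poisson events is still tilted by $E_T$, and tilted \emph{against} infections near $\Delta$. ``The conditioning is local while the path is infinite'' is not an argument for uniformity in $T$: the probability of $E_T$ decays exponentially in $T$, so a priori the distortion of the internal events could grow without bound.

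The paper sidesteps all of this with an indirect argument whose key ingredient you are missing. Set $X_i := \max\{\bar{\eta}_t(y) : y\in\Delta,\ t\in[i,i+1)\}$; this stationary $\{0,1\}$-valued process is dFKG, and Lemma \ref{lem DG1} gives $\bar{\mathbb{P}}_{\lambda}(X_0=\dots=X_{m-1}=0)\le(1-\delta)^m$. Theorem 1.2 of \cite{LiggettSteifSD2006} converts this exponential bound into exactly the uniform conditional lower bound that your restart-under-conditioning argument was meant to supply, namely $\lim_{T\uparrow\infty}\bar{\mathbb{P}}_{\lambda}(X_0=1\mid X_i=0,\ -T<i<0)\ge\delta$. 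Then one argues by contradiction: if $\bar{\mathbb{P}}_{\lambda}(\bar{\eta}_0(y)=1\mid X_s=0,\ s<0)=0$ for every $y\in V\setminus\Delta$, the conditioned measure at time $0$ would be $\delta_{\underbar{0}}$, and since $\underbar{0}$ is absorbing the Markov property would force $X_0=0$ a.s., contradicting the displayed bound; hence some $x$ works, and \eqref{eq dFKG SD} transports the bound to arbitrary $A_0$. If you want to repair your write-up, the Liggett--Steif theorem (equivalently, Lemmas 2.4--2.5 of \cite{BergBethuelsenCP2018}) is the tool that performs the uniform-in-$T$ step you are currently only asserting.
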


\begin{proof}
Let $X=(X_i)_{i \in \mathbb{Z}}$ be the (discrete-time) process on $\{0,1\}$ given by 
\begin{equation}
X_i := \max \left\{ \bar{\eta}_t(x) \colon x \in \Delta, t \in [i,i+1) \right\}.
\end{equation}
Since $\bar{\mathbb{P}}_{\lambda}$ is dFKG  and the dFKG property is preserved under taking maximum, it follows that also the process $(X_i)$ is dFKG. Moreover, by Lemma \ref{lem DG1} applied to \cite{BergBethuelsenCP2018}, Lemma 2.5, we have that, for some $\delta>0$,
\begin{equation}\label{eq estimate X}
\bar{\mathbb{P}}_{\lambda} \left(X_i = 0, i \in [0,m-1] \right)\leq \bar{\mathbb{P}}_{\lambda} \left(\bar{\eta}_t(0) = 0, t \in [0,m] \right) \leq (1-\delta)^m.
\end{equation}
 Since $(X_i)$ is translation invariant with respect to time shifts, we can thus apply Theorem 1.2 in \cite{LiggettSteifSD2006}, which says that the estimate in \eqref{eq estimate X} is equivalent to
\begin{equation}\label{eq lem phi mixing uniform X}
\lim_{T \uparrow \infty} \bar{\mathbb{P}}_{\lambda} \left( X_0^{(n)} =1 \mid X_i^{(n)} = 0, -T < i <0  \right) \geq \delta.
\end{equation}
Note that,  by the dFKG property, the limit on the left hand side of  \eqref{eq lem phi mixing uniform X} is decreasing in $T$ and hence the limit is well defined. 
 
 Now, consider the function $h \colon V\setminus \Delta \rightarrow [0,1]$ given by
\begin{equation}
h(x):= \bar{\mathbb{P}}_{\lambda}\left(\eta_0(x)=1 \mid X_s=0, s<0\right),
\end{equation}
which, again  by the dFKG property, is well defined. We claim that $h(x)>0$ for some $x \in V\setminus \Delta$. Indeed, otherwise the measure $\bar{\mathbb{P}}_{\lambda}\left(\cdot \mid X_s=0, s<0\right)$ equals $\delta_{\underbar{0}}$ on $\mathcal{F}_0 \subset \mathcal{F}$, the $\sigma$-algebra generated by events in $V \times \{0\}$. By the Markov property of the contact process, this readily contradicts \eqref{eq lem phi mixing uniform X}. 
We thus conclude the proof of the lemma since, as a consequence of \eqref{eq dFKG SD}, we have that the righthand side of \eqref{lem domi12345} can be taken to equal the $\delta$ in \eqref{eq estimate X}.
\end{proof}

\begin{corollary}\label{lem abscon}
There exists an $\epsilon>0$ such that, for all $t$ large, we have that 
\begin{equation}
\sup_{B_t \in \mathcal{F}_{\geq t}^{\Delta} } \sup_{A_0 \in \mathcal{F}_{\leq 0}^{\Delta}} \left| \bar{\mathbb{P}}_{\lambda}(B_t \mid A_0 ) - \bar{\mathbb{P}}_{\lambda} (B_t ) \right|\leq 1-\epsilon.
\end{equation}
\end{corollary}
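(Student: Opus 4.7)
The plan is a coupling argument. I will construct a joint law of $(\bar\eta, \bar\eta')$ with marginals $\bar{\mathbb{P}}_{\lambda}(\cdot \mid A_0)$ and $\bar{\mathbb{P}}_{\lambda}$, such that for some uniform $\epsilon > 0$ and all sufficiently large $t$, the two processes agree on $\Delta \times [t, \infty)$ with probability at least $\epsilon$, uniformly over $A_0 \in \mathcal{F}_{\leq 0}^{\Delta}$ with $\bar{\mathbb{P}}_{\lambda}(A_0) > 0$. The standard coupling inequality then yields the desired uniform bound $1 - \epsilon$.

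Take $x \in V$ and $\delta > 0$ from Lemma \ref{lem domination200}. Its unconditional counterpart is $\bar{\nu}_{\lambda}(\eta(x) = 1) = \mathbb{P}_{\lambda}^{x}(\tau^x = \infty) =: \pi$, positive under $\lambda > \lambda_c(\mathbb{N})$ by the embedded-$\mathbb{N}$ argument applied to $\Gamma^{(x)}$. Set $\mu_{A_0}(\cdot) := \bar{\mathbb{P}}_{\lambda}(\bar\eta_0 \in \cdot \mid A_0)$. By the Markov property, $\bar{\mathbb{P}}_{\lambda}(\cdot \mid A_0)|_{\mathcal{F}_{\geq 0}}$ is the mixture of contact-process laws with initial distribution $\mu_{A_0}$, and analogously $\bar{\mathbb{P}}_{\lambda}|_{\mathcal{F}_{\geq 0}}$ uses $\bar{\nu}_{\lambda}$. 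I therefore couple by sampling $\bar\eta_0 \sim \mu_{A_0}$ and $\bar\eta'_0 \sim \bar{\nu}_{\lambda}$ independently, together with an independent forward graphical construction $\omega^+$ on $V \times [0, \infty)$, and evolving $\bar\eta$ and $\bar\eta'$ from their respective initial states using $\omega^+$. Let $\eta^x$ and $\eta^V$ denote the contact processes started at time $0$ from $\{x\}$ and from all of $V$, also driven by $\omega^+$.

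The key step is a monotonicity sandwich. On the event
\[
E := \{\bar\eta_0(x) = 1\} \cap \{\bar\eta'_0(x) = 1\} \cap \{\tau^x = \infty\},
\]
which by independence of the three factors has probability $\geq \delta \pi^2 =: p_0 > 0$, pathwise monotonicity of the graphical construction gives $\eta^x_s \leq \bar\eta_s, \bar\eta'_s \leq \eta^V_s$ for every $s \geq 0$, so
\[
\{\bar\eta_s(y) \neq \bar\eta'_s(y)\} \subset \{\eta^x_s(y) \neq \eta^V_s(y)\}.
\]
This sidesteps any detailed analysis of the complicated initial distributions $\mu_{A_0}$: the dominating event depends only on $\omega^+$ and is uniform in $A_0$. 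It remains to bound, conditional on $\tau^x = \infty$, the probability that $\eta^x$ and $\eta^V$ disagree anywhere on $\Delta \times [t, \infty)$.

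Since $\Delta$ is finite and $\alpha > 0$ from Proposition \ref{cor survival in D} is fixed, for $t$ large enough every $y \in \Delta$ satisfies $dist(x, y) \leq \alpha t$; Proposition \ref{cor survival in D} then yields $\widehat{\mathbb{P}}_{\lambda}(\eta^x_s(y) \neq \eta^V_s(y) \mid \tau^x = \infty) \leq Ce^{-cs}$ for every $s \geq t$ and $y \in \Delta$. Upgrading to a cylinder bound is done exactly as in \eqref{eq thm local shape help}: a discrepancy at $y$ during $[t+k, t+k+1)$ either already exists at time $t+k$ or is created by an arrow event to $y$ from a neighbor where $\eta^x$ and $\eta^V$ disagree, and both contributions are controlled by the single-point bound. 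Summing over $y \in \Delta$ and $k \geq 0$ gives
\[
\widehat{\mathbb{P}}_{\lambda}\left( \exists (y, s) \in \Delta \times [t, \infty) \colon \eta^x_s(y) \neq \eta^V_s(y) \,\middle|\, \tau^x = \infty \right) \leq C|\Delta|e^{-ct}.
\]
For $t$ large this is at most $1/2$, so the coupling succeeds with probability $\geq p_0/2$, proving the corollary with $\epsilon = p_0/2$. The main technical point is the monotonicity sandwich, which reduces the $A_0$-dependent problem to a single estimate depending only on $\omega^+$ that follows directly from Proposition \ref{cor survival in D}.
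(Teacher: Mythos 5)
Your proof is correct and follows essentially the same route as the paper: the paper's proof is a one-line combination of Lemma \ref{lem domination200}, Theorem \ref{thm local shape theorem} and the Markov property, and your coupling argument is precisely a fleshed-out version of that combination, substituting Proposition \ref{cor survival in D} (the engine behind Theorem \ref{thm local shape theorem}) together with the cylinder upgrade of \eqref{eq thm local shape help} for the theorem itself. The monotonicity sandwich and the uniform lower bound $\delta\pi^2$ on the good event are exactly the intended mechanism, so no gap.
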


\begin{proof}
This follows immediately by combining Lemma \ref{lem domination200} with Theorem \ref{thm local shape theorem} together with the Markov property of the contact process.
\end{proof}

\begin{proof}[Proof of Theorem \ref{thm phi mixing}]
Firstly, the contact process on $G$ is mixing (in the ergodic-theoretic sense), that is, 
\begin{equation}\label{eq mixing ET}
\lim_{s \rightarrow \infty} \bar{\mathbb{P}}_{\lambda}(A \cap T_{-s}B) =\bar{\mathbb{P}}_{\lambda}(A) \bar{\mathbb{P}}_{\lambda}(B), \quad \forall \: A,B \in \mathcal{F}.
\end{equation}
This follows as a consequence of Proposition \ref{prop covariance}. 
In particular, \eqref{eq mixing ET} holds for the process $\xi^{(\Delta)}$, which we also recall is a stationary process. 

For a stationary process satisfying \eqref{eq mixing ET}, the mixing time $d_{\Delta}(t)$ either converges towards $0$ or it is equal to $1$ for all $t$. A detailed proof of this fact is given in \cite{BradleyStrongMixing2005}, Theorem 22.3. (\cite{BradleyStrongMixing2005} considers discrete-time processes, however, the argument extends immediately to continuous-time processes). From this and the two observations above, we thus conclude the proof of Theorem \ref{thm phi mixing} by noting that,  by Corollary \ref{lem abscon}, we have $d_{\Delta}(t)<1$ for all sufficiently large $t$.
\end{proof}

\subsection{Proof of Theorem \ref{thm phi mixing Zd}}\label{sec thm phi mixing Zd}

Important to the proof of Theorem \ref{thm phi mixing Zd}  is the following lemma, which can be seen as an extention of Lemma \ref{lem domination200} for the particular case of the contact process on $\mathbb{Z}^d, d\geq 1$.

\begin{lemma}\label{lem phi mixing Zd}
Consider the upper stationary contact process on $\mathbb{Z}^d$, $d\geq1$, with $\lambda>\lambda_c(\mathbb{Z}^d)$. Then there exists $l \in \mathbb{N}$ and $\delta>0$ such that, for all $n\in \mathbb{N}$ and $T <0$,
\begin{equation}\label{eq lem phi mixing Zd statement}
\bar{\mathbb{P}}_{\lambda} \left( \bar{\eta}_0((n+l)\cdot e_1) =1 \mid \bar{\eta}_t(x)= 0, x \in \Delta_n, t \leq [T,0] \right) \geq \delta,
\end{equation}
where $e_1=(1,0,\dots,0)$ is a unit vector.
\end{lemma}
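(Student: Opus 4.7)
The plan is to adapt the proof of Lemma \ref{lem domination200}---combining Lemma \ref{lem DG1}, the dFKG property \eqref{eq dFKG SD}, and Theorem 1.2 of \cite{LiggettSteifSD2006}---and to exploit the translation invariance of $\mathbb{Z}^d$ in order to pin down the infected site at $y := (n+l) e_1$. Introduce the stationary $\{0,1\}$-valued process $X_i := \max\{\bar{\eta}_t(x) : x \in \Delta_n,\, t \in [i, i+1)\}$, which by preservation of dFKG under maxima is itself dFKG. Applying Lemma \ref{lem DG1} to an embedded $\mathbb{N}$-subgraph rooted at a vertex of $\Delta_n$, combined with Lemma 2.5 of \cite{BergBethuelsenCP2018}, yields $\bar{\mathbb{P}}_\lambda(X_i = 0,\, i \in [0, m-1]) \leq (1 - \delta_0)^m$ for some $\delta_0 > 0$ independent of $n$. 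Theorem 1.2 of \cite{LiggettSteifSD2006} then gives
\[
\lim_{T \to \infty} \bar{\mathbb{P}}_\lambda(X_0 = 1 \mid X_i = 0,\, i \in [-T, -1]) \geq \delta_0,
\]
the limit being approached monotonically from above by dFKG. Since the conditional probability appearing in the statement of the lemma, as a function of $|T|$, is similarly monotone decreasing by dFKG, proving the lemma reduces to establishing the uniform-in-$n$ lower bound
\[
h_n(y) := \lim_{T \to \infty} \bar{\mathbb{P}}_\lambda(\bar{\eta}_0(y) = 1 \mid X_i = 0,\, i \in [-T, -1]) \geq \delta
\]
at the specific site $y = (n+l) e_1$.

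For this identification step, the $\mathbb{Z}^d$-specific idea is to compare the limiting conditional measure with the upper stationary contact process on the right half-space $H_n^+ := \{x \in \mathbb{Z}^d : x_1 > n\}$. Since $\lambda_c(H_n^+) = \lambda_c(\mathbb{Z}^d) < \lambda$ (for $d = 1$ this is the statement $\lambda_c(\mathbb{N}) = \lambda_c(\mathbb{Z})$ from \cite{DurretGriffeathCP1983}, and for $d \geq 2$ the extension is analogous), the upper invariant measure $\bar{\nu}_\lambda^{H_n^+}$ is nontrivial; by translation invariance in the $e_1$-direction, $\bar{\nu}_\lambda^{H_n^+}(\eta((n+l)e_1) = 1) = \rho_l > 0$ depends only on $l$ (and in particular is uniform in $n$). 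Under the limiting conditional measure, the graphical construction restricted to $H_n^+$ is affected by the conditioning only through arrows from $H_n^+$ into $\Delta_n$, whose influence is confined to a neighbourhood of the face $\{x_1 = n+1\}$. Choosing $l$ sufficiently large and combining with a restart argument in the spirit of the proof of Lemma \ref{lem DG1}---following an embedded half-line from $(n+l)e_1$ further into the bulk of $H_n^+$---should give $h_n((n+l)e_1) \geq \rho_l/2$, uniformly in $n$.

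The main obstacle is making this comparison precise. A direct FKG argument goes the wrong way: as functions of the graphical events in $H_n^+$, the event $\{\bar{\eta}_0^{H_n^+}((n+l)e_1) = 1\}$ is increasing in arrows and decreasing in stars, while the conditioning has the opposite monotonicity (more stars kill backward paths from $\Delta_n$ that leak into $H_n^+$), so the two are negatively correlated. Overcoming this will likely require either a finite-energy / restart-style decoupling of the local event near $(n+l)e_1$ from the boundary-affected portion of $H_n^+$, or an explicit two-scale coupling of the conditioned process with the half-space process; this is the most delicate part of the plan.
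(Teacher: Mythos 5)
Your first half matches the paper: the reduction via the dFKG-preserved maximum process, the uniform-in-$n$ bound $\bar{\mathbb{P}}_\lambda(X_i=0,\ 0\leq i\leq m-1)\leq(1-\delta_0)^m$ obtained from Lemma \ref{lem DG1} together with Lemma 2.5 of \cite{BergBethuelsenCP2018}, and the application of Theorem 1.2 of \cite{LiggettSteifSD2006} are exactly the paper's first step, and reducing the lemma to the uniform lower bound $h_n((n+l)e_1)\geq\delta$ is the right target. The problem is the identification step, which is the actual content of the lemma and which you leave open. You correctly diagnose that the half-space comparison fails as stated: under the graphical construction the conditioning event is decreasing in arrows and increasing in recovery marks, while $\{\bar{\eta}_0^{H_n^+}((n+l)e_1)=1\}$ has the opposite monotonicity, so FKG for the driving Poisson processes yields negative correlation and the inequality you need points the wrong way. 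Neither the ``finite-energy decoupling'' nor the ``two-scale coupling'' is carried out, so as written the argument has a genuine gap precisely where the lemma is nontrivial.

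The paper closes this gap by a softer route that avoids any comparison with a half-space process. It sets $f(x):=\lim_{n}\lim_{T}\bar{\mathbb{P}}_\lambda\left(\bar{\eta}_0(x)=1\mid X^{(n)}_i=0,\ -T<i<0\right)$, well defined by dFKG monotonicity in $T$ and $n$, and argues by contradiction: if $f$ vanished at every site off the conditioned block, then for any $M$ and $\epsilon>0$ and all large $n$ every site within distance $M$ of the block would have conditional probability less than $\epsilon$ of being infected at time $0$ (translation invariance is used here), and by the Markov property together with the finite speed of propagation this would force $\bar{\mathbb{P}}_\lambda(X^{(n)}_0=1\mid X^{(n)}_i=0,\ -T<i<0)$ below $\delta_0$ for suitable $\epsilon$ and $M$, contradicting the uniform Liggett--Steif bound. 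Hence $f(x)>0$ at some site at a fixed distance $l$ from the block, and translation invariance plus \eqref{eq dFKG SD} transport this to the specific site $(n+l)e_1$ uniformly in $n$. (For $d\geq2$ the same contradiction is run with slab processes $Z^{(n)}_i$ indexed by $\mathbb{Z}^{d-1}$, using Theorem 1.5 of \cite{BergBethuelsenCP2018} and Theorem 4.1 of \cite{LiggettSteifSD2006}.) If you replace your half-space comparison by this compactness-style contradiction, the rest of your outline goes through.
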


\begin{proof} 
We first consider the case of the contact process on $\mathbb{Z}$ with $\lambda>\lambda_c(\mathbb{Z})$. 
For $n \in \mathbb{N}$, let $(Y_i^{(n)})_{i \in \mathbb{Z}}$ be the (discrete-time) process on $\{0,1\}$ defined by
\begin{equation}
Y_i^{(n)} := \max \left\{ \bar{\eta}_t(y) \colon y= j=-n,\dots, -1 \text{ and } t \in [i,i+1) \right\}, \quad i \in \mathbb{Z}.
\end{equation} 
By the same argument as in the proof of Lemma \ref{lem domination200}  we conclude that the process $(Y_i^{(n)})$ is dFKG and, moreover, that for some $\delta>0$,
\begin{equation}\label{eq lem phi mixing uniform d=1}
\lim_{T \uparrow \infty} \bar{\mathbb{P}}_{\lambda} \left( Y_0^{(n)} =1 \mid Y_i^{(n)} = 0, -T < i <0  \right) \geq \delta. 
\end{equation}
Observe that this estimate holds irrespectively of $n$.

Further, consider the function $f \colon \mathbb{N} \rightarrow [0,1]$ given by
\begin{equation}
f(x):= \lim_{n\rightarrow \infty} \lim_{T \uparrow \infty} \bar{\mathbb{P}}_{\lambda} \left( \bar{\eta}_0(x) =1 \mid Y_i^{(n)} = 0, -T < i <0  \right)
\end{equation}
which, again  by the dFKG property, is well defined. We claim that $f(x)>0$ for some $x \in \mathbb{N}$. For a contradiction, assume that this is not the case. Hence, for any fixed $M \in \mathbb{N}$ and $\epsilon>0$, we may chose $N$ so large so that, for all $n\geq N$, we have that
\begin{equation}\label{eq estimate last night}
 \lim_{T \uparrow \infty} \bar{\mathbb{P}}_{\lambda} \left( \bar{\eta}_0(x) =1 \mid Y_i^{(n)} = 0, -T < i <0  \right) < \epsilon
\end{equation}
for all $x \in[1,M] \cup [-(M+n),n]$, where we also make use of the translation invariance of the contact process. The estimate \eqref{eq estimate last night}, however, is easily seen to  contradict \eqref{eq lem phi mixing uniform d=1} by choosing  $\epsilon$ and $M$ appropriately. We thus conclude that $f(x)>0$ for some $x \in \mathbb{N}$ and from this, by translation invariance and the dFKG property, we conclude \eqref{eq lem phi mixing Zd statement} for the case $\mathbb{Z}$ and $\lambda>\lambda_c(\mathbb{Z})$.

We next consider the case of the contact process on $\mathbb{Z}^d$ with $\lambda>\lambda_c(\mathbb{Z}^d)$ for which we need to adapt the above proof slightly. 
For $n \in \mathbb{N}$, let $(Z_i^{(n)})$ be the (discrete-time) process on $\{0,1\}^{\mathbb{Z}^{d-1}}$ defined by
\begin{equation}
Z_i^{(n)}(x) := \max \left\{ \bar{\eta}_t(y) \colon y=(j,x) \in \mathbb{Z}^d, j\in [-n,0], t \in [i,i+1)  \right\}.
\end{equation} 
Again this process is dFKG. Moreover, it holds that, for some $\delta>0$,
\begin{equation}
\bar{\mathbb{P}}_{\lambda} \left(Z_i^{(n)}(x) = 0 \text{ for all } (x,i) \in [0, m]^{d-1} \times [0,m] \right)\leq (1-\delta)^{m^d},
\end{equation}
as shown in \cite{BergBethuelsenCP2018}, Theorem 1.5. Thus, by applying  Theorem 4.1 in \cite{LiggettSteifSD2006} (see also Lemma 2.4 in \cite{BergBethuelsenCP2018}), we conclude that
\begin{equation}\label{eq lem phi mixing uniform}
\bar{\mathbb{P}}_{\lambda} \left( Z_0^{(n)}(o) =1 \mid Z_i^{(n)}(x) = 0 \text{ for all } x \in \mathbb{Z}^{d-1} \text{ and } i <0 \right) \geq \delta,
\end{equation}
where $o \in \mathbb{Z}^{d-1}$ denotes the origin. Next, let $g \colon \mathbb{N} \rightarrow [0,1]$ be given by
\begin{equation}
g(m):= \lim_{n\rightarrow \infty} \lim_{T \uparrow \infty} \bar{\mathbb{P}}_{\lambda} \left( \exists  x\in\Delta_m \colon \bar{\eta}_0(x) =1 \mid Z_i^{(n)}(x) = 0,  i \in (-T,0), x \in \mathbb{Z}^{d-1}  \right)
\end{equation}
which, again  by the dFKG property, is well defined. We claim that $g(m)>0$ for some $m \in \mathbb{N}$. Indeed, the opposite contradicts \eqref{eq lem phi mixing uniform}, as can be seen by arguing similarly as in the $d=1$ case. By this claim, and using translation invariant and the dFKG property, we conclude \eqref{eq lem phi mixing Zd statement} also in this case and hence the proof of Lemma \ref{lem phi mixing Zd} is complete.\end{proof}

\begin{remark}
The above proof strongly relies on symmetries of the contact process on $\mathbb{Z}^d$ and cannot immediately be extended to general graphs.  These problems, however, can be  circumvented for discrete-time analogs of the contact process for which a version of  \eqref{eq lem phi mixing Zd statement} holds, replacing $\Delta_n$ by any simply connected set $\Delta$. Indeed, by using that infections spread only to neighbouring individuals and that it is a process in discrete-time, by following the argument for the contact process on $\mathbb{Z}$ above, it is not difficult to see that
\begin{equation}\label{eq discrete uniform bound}
\bar{\mathbb{P}}_{\lambda} \left( \bar{\eta}_0(x) =1, x \in \partial \Delta \mid \bar{\eta}_t(x)= 0, x \in \Delta, t \leq [T,0] \cap \mathbb{Z} \right) \geq \delta,
\end{equation}
where $\partial \Delta = \{ x \notin \Delta \colon dist(x,y)=1 \text{ for some } y \in \Delta\}$ is the outer boundary of $\Delta$. 
\end{remark}

Before presenting the proof of Theorem \ref{thm phi mixing Zd}, we first recall a couple of large deviation estimates from \cite{GaretMarchandLDP2014}, see Theorem 1 and 4 therein, for the contact process on $\mathbb{Z}^d$. (\cite{GaretMarchandLDP2014} in fact considered a generalisation of the contact process, evolving in a \emph{random environment}). For this, denote by
\begin{equation}
t'(x):= \inf \{ T \geq 0 \colon \forall t \geq T, \eta_t^o(x) = \eta_t^{\mathbb{Z}^d}(x) \}, \quad x \in \mathbb{Z}^d,
\end{equation}
and recall the definitions of $t(x)$ and $\beta$; see Equations \eqref{eq t(x)} and \eqref{eq beta}.

\begin{lemma}\label{lem Garet Marchand}
Consider the contact process on $\mathbb{Z}^d$, $d\geq 1$, with $\lambda>\lambda_c(\mathbb{Z}^d)$. Then there are constant $C,c\in(0,\infty)$ such that for any $\epsilon>0$ we have that
\begin{align}
&\mathbb{P}_{\lambda}^o \left( t'(n \cdot e_1) \geq \beta n (1+\epsilon) \mid \tau^o=\infty \right) \leq Ce^{-c n}
\\&\mathbb{P}_{\lambda}^o \left( t(x) \leq \beta n (1-\epsilon), dist(x,o) \geq n \mid \tau^o=\infty \right) \leq Ce^{-c n}
\end{align}

\end{lemma}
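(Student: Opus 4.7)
The plan is to combine the regeneration/restart structure of the supercritical contact process (already exploited in Lemma \ref{lem DG1} and Lemma \ref{lem estimates CP on N}) with subadditive arguments adapted to the shape theorem \eqref{eq beta}. Both estimates are essentially quantitative refinements of \eqref{eq beta}, and they follow the route taken in \cite{GaretMarchandLDP2014}, which we outline here. The crucial ingredient is the translation invariance of the contact process on $\mathbb{Z}^d$ together with the finite exponential moments of the renewal increments, both of which are special to this setting.

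For the second inequality (large-deviation bound against infections travelling faster than $\beta$), I would first fix $x$ with $\dist(x,o) \geq n$ and decompose the passage time $t(x)$ along a nearest-neighbour shortest path from $o$ to $x$ as
\begin{equation}
t(x) \geq \sigma_1 + \sigma_2 + \dots + \sigma_k, \qquad k \asymp n,
\end{equation}
where the $\sigma_i$ are one-step passage times constructed via the restart scheme of Lemma \ref{lem DG1}: each $\sigma_i$ is the time needed to transmit the infection across one edge within an embedded copy of $\Gamma^{(y_i)}$ in the chosen direction. By Lemma \ref{lem estimates CP on N} applied to each such embedded $\mathbb{N}$, the $\sigma_i$ dominate i.i.d.\ random variables with strictly positive mean (at least $1/\alpha$) and exponential tails. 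A Cramér--Chernoff estimate then yields $\mathbb{P}_\lambda^o(t(x) \leq \beta n(1-\epsilon) \mid \tau^o=\infty) \leq Ce^{-cn}$ for a suitable constant $c=c(\epsilon)>0$. Since the number of $x$ at distance exactly $n$ grows only polynomially, a union bound over all $x$ with $\dist(x,o) \geq n$ finishes this part.

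For the first inequality, I would write $t'(n \cdot e_1) \leq t(n \cdot e_1) + \rho_n$, where $\rho_n$ is the last time after $t(n\cdot e_1)$ at which $\eta^o_t(n \cdot e_1)$ differs from $\eta^{\mathbb{Z}^d}_t(n\cdot e_1)$. The term $t(n\cdot e_1)$ is controlled from above by the mirror version of the argument above: a sub\-additive upper bound along a straight path in direction $e_1$ using Lemma \ref{lem estimates CP on N} directly gives $\mathbb{P}_\lambda^o(t(n\cdot e_1) \geq \beta n(1+\epsilon/2) \mid \tau^o=\infty) \leq Ce^{-cn}$. The term $\rho_n$ is controlled via the self-duality of the contact process: the event $\{\rho_n > s\}$ is contained in the event that some dual path started at $(n\cdot e_1, t(n\cdot e_1)+s)$ survives for time $s$ without linking $o$ to $n\cdot e_1$, and Lemma \ref{lem DG1} applied to the dual produces the exponential decay $Ce^{-cs}$ uniformly in $n$. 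Choosing $s = \beta n \epsilon/2$ finishes the bound.

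The main obstacle is turning the heuristic subadditivity above into a genuine i.i.d.\ renewal structure with exponential tails. Naive subadditive ergodic theorems only yield a law of large numbers, not large-deviation rates. The standard way out, carried through in \cite{GaretMarchandLDP2014}, is to construct explicit regeneration times under $\mathbb{P}_\lambda^o(\,\cdot\,\mid \tau^o=\infty)$, based on ``break points'' of the space-time infection cone at which the future of the graphical construction becomes independent of the past. Establishing that these regeneration times have exponential moments is the technical core; once available, the two large-deviation estimates follow from classical exponential concentration for sums of i.i.d.\ random variables with exponential tails.
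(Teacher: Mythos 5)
First, note that the paper does not prove this lemma at all: it is quoted directly from \cite{GaretMarchandLDP2014} (Theorems 1 and 4 therein), so your sketch has to be measured against the original proofs rather than against anything in this paper. Your closing paragraph correctly identifies where the real work lies --- constructing regeneration times for the process conditioned on survival and showing that they have exponential moments --- and you rightly flag that you are not carrying that out. The problem is that the reductions you propose before that point contain genuine errors, and they could not produce the statement even if the regeneration structure were handed to you.

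The decisive issue is the constant $\beta$. Lemma \ref{lem estimates CP on N} only says that on an embedded copy of $\mathbb{N}$ the infection spreads at linear speed at least $\alpha$ for some unspecified $\alpha>0$; it gives an upper bound $t(x)\lesssim \dist(o,x)/\alpha$ and no lower bound on any passage time. Consequently it cannot certify that your one-step increments $\sigma_i$ have mean at least $1/\alpha$ (the inequality points the wrong way), and in the first estimate it yields $t(n\cdot e_1)\leq n/\alpha$ rather than $\leq \beta n(1+\epsilon/2)$; since a priori $1/\alpha\gg\beta$, this loses exactly the sharp time constant that the cutoff in Theorem \ref{thm phi mixing Zd} depends on. For the second estimate, the proposed deterministic inequality $t(x)\geq\sigma_1+\dots+\sigma_k$ along a fixed shortest path is false for $d\geq2$: the infection need not pass through the vertices of your chosen path, and if you instead decompose via successive crossings of $\ell^1$-spheres the increments are neither independent nor bounded below in mean (the infected boundary grows, so later crossings are faster). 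Even granting an i.i.d.\ structure, Cram\'er gives exponential decay only below a speed determined by $\mathbb{E}[\sigma_1]$, not below the specific value $\beta(1-\epsilon)$ coming from \eqref{eq beta}. Matching the exact constant in both tails is precisely the content of the Garet--Marchand theorems, and there is no shortcut to it through Lemmas \ref{lem DG1} and \ref{lem estimates CP on N}. (Your treatment of $\rho_n$ by duality is sound in spirit --- it mirrors Proposition \ref{cor survival in D} of the paper --- but it is the only part of the sketch that survives.)
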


\begin{proof}[Proof of Theorem \ref{thm phi mixing Zd}]
Let $\bar{\eta}$ be the upper stationary contact process on $\mathbb{Z}^d$, $d\geq1$, with $\lambda>\lambda_c(\mathbb{Z}^d)$. Then there are constants $C,c \in (0,\infty)$ such that, for any $B \in \mathcal{F}_{\geq t}^{\Delta_n}$, 
\begin{equation}\label{eq eq seq seq}
\left|\bar{\mathbb{P}}_{\lambda}(B) - \mathbb{P}_{\lambda}^{\mathbb{Z}^d}( B) \right| \leq Cn^d e^{-ct}, \quad \forall \: t\geq0.
\end{equation} 
This follows by self-duality and a standard union bound together with the fact that the contact process started from all individuals infected converges exponentially fast to $\bar{\nu}_{\lambda}$, that is, Theorem 1.2.30 in \cite{LiggettSIS1999}. 

 In order to conclude the first statement of the theorem, by the dFKG property (in particular, \eqref{eq dFKG SD}), it is thus sufficient to show that for each $\epsilon>0$ there are constant $C,c\in (0,\infty)$ such that, for every $B \in \mathcal{F}_{\geq t}^{\Delta_n}$ we have that
\begin{equation}\label{eq last thm HELP}
|\bar{\mathbb{P}}_{\lambda}(B) - \bar{\mathbb{P}}_{\lambda}(B \mid A_0)|\leq C n^d e^{-c t}, \quad \forall \: t\geq \beta n (1+\epsilon),
\end{equation} 
where $A_0$ here denotes the event $\{  \bar{\eta}_t(x) =0 \: \forall \: (x,t) \in \Lambda \times (-\infty,0]\}$.

To this end, denote by $\widehat{\mathbb{P}}_{\lambda}$ the coupling of $(\eta_t)_{t\geq0}$ and $(\eta_t^{\mu})_{t\geq0}$ via the graphical construction, where $\mu(\cdot) =  \mathbb{P}(\eta_0\in \cdot \mid A_0)$ is a measure on $(\Omega, \mathcal{F}_0)$  and $\eta_t^{\mu}$ is the contact process started with a configuration drawn according to $\mu$ at time $0$. We have that
\begin{align}\label{eq eq beq beq}
\left| \bar{\mathbb{P}}_{\lambda}(B) - \mathbb{P}_{\lambda}^{\mu}(B) \right| 
\leq  \sum_{x \in \Delta_n} \widehat{\mathbb{P}}_{\lambda} \left( \bar{\eta} \neq \eta^{\mu} \text{ on } \{x\} \times [t, \infty) \right).
 \end{align}
 Let $\epsilon>0$ and consider for each $x\in \Delta_n$ the event 
 \begin{equation}
 M_x := \inf \{ dist(y,x) \colon \eta_0^{\mu}(y)=1 \text{ and } \tau^y = \infty \}.
 \end{equation} 
In order to control the terms within the sum of \eqref{eq eq beq beq}, we use that
 \begin{align}
 &\widehat{\mathbb{P}}_{\lambda} \left( \bar{\eta} \neq \eta^{\mu} \text{ on } \{x\} \times [t, \infty) \right)
 \\ \label{eq eq eq eq} \leq &\widehat{\mathbb{P}}_{\lambda} \left( \{M_x>\epsilon t\} \right) + \widehat{\mathbb{P}}_{\lambda} \left( \{\bar{\eta} \neq \eta^{\mu} \text{ on } \{x\} \times [t, \infty) \} \cap \{M_x \leq \epsilon t\} \right).
 \end{align}

The first term on the righthand side of \eqref{eq eq eq eq} decays exponentially in $\delta t$. Indeed, by Lemma \ref{lem phi mixing Zd} above there is an $l\in \mathbb{N}$ and $\delta>0$ such that the measure $\mu$ stochastically dominates a Bernoulli product measure  on the subset $\{ y \in \mathbb{Z}^d \colon y=x+ e_x \cdot(n \pm rl), r \in \mathbb{N} \} $ with density $\delta$. Hence, by standard large deviation estimates for Bernoulli measures combined with Theorem 1.2.30 in \cite{LiggettSIS1999}, we conclude the exponential decay of $\mathbb{P}_{\lambda}^{\mu} \left( M_x>\epsilon t \right)$ in $t \geq0$. Next, the second term on the righthand side of \eqref{eq eq eq eq} decays exponentially whenever $t \geq \beta n (1+\epsilon)$, as a consequence of the first inequality in Lemma \ref{lem Garet Marchand}. From these two bounds, which hold uniformly in $x\in \Delta_n$, we conclude that \eqref{eq eq eq eq} decays exponentially in $t$. In particular, we have that 
\begin{equation}
\eqref{eq eq beq beq} 
\leq \sum_{x \in \Delta_n}  Ce^{-ct} \leq Cn^de^{-ct}, \quad \forall \: t\geq \beta n (1+\epsilon),
 \end{equation}
which together with \eqref{eq eq seq seq} yields the statement of \eqref{eq phi mixing Zd upper}.

For the second part, that is \eqref{eq cutoff}, we first note that \eqref{eq phi mixing Zd upper} implies that, for any $\epsilon>0$, we have $d_{\Delta_n}(\beta n (1+\epsilon)) \rightarrow 0$ as $n \rightarrow \infty$. In order to prove a lower bound, denote by $(\eta_t^{\Delta_n^c})$ the contact process started from the configuration where all individual inside $\Delta_n$ are healthy and all the other individuals are infected. Further, let $B_r:= \{\eta_0(x) =1 \text{ for some } x \in \Delta_r\}$. Then, for any $r \in \mathbb{N}$ and $\epsilon>0$, 
\begin{equation}\label{eq cutoff 1}
 \mathbb{P}_{\lambda}^{\Delta_n^c}(\eta_{\beta n(1-\epsilon)}^{\Delta_n^c} \in B_r ) \leq C r^d e^{-c(n-r)}, \quad \forall \: n \geq 0,
 \end{equation}
 which for fixed $r$ goes to $0$ as $n\rightarrow \infty$. Indeed, this bound follows as a consequence of the second inequality in Lemma \ref{lem Garet Marchand} above and by the self-duality of the contact process. From this we obtain the second limit in \eqref{eq cutoff} since $\bar{\mathbb{P}}_{\lambda}(B_r)\rightarrow 1$ as $r \rightarrow \infty$, and by this we conclude the proof.
\end{proof}

\begin{remark}\label{rem phi mixing Zd}
Some of the arguments in the proof of Theorem \ref{thm phi mixing Zd} can surely be extended to other graphs. In particular, assuming that $\lambda> \lambda_c(\mathbb{N})$, one can instead of Lemma \ref{lem Garet Marchand} use the estimate on the spread of an infection obtained in Theorem \ref{thm local shape theorem}. An important step in the proof of \eqref{eq phi mixing Zd upper} above is, however, the use of Lemma \ref{lem phi mixing Zd}.  On general graphs, we do not know how to obtain such an uniform estimate.

Interestingly, these problems can be completely  circumvented for discrete-time analogs of the contact process. By applying \eqref{eq discrete uniform bound} in place of Lemma \ref{lem phi mixing Zd}, the first part of the proof of Theorem \ref{thm phi mixing Zd} goes through without difficulty, using a  discrete-time version of Theorem \ref{thm local shape theorem} for the large deviation estimates in the final estimate.
\end{remark}

\section{Discussion and open questions}\label{sec discussion}
In this last section we mention some potential extensions of the theory so far presented and discuss some in our opinion intriguing open questions.

\begin{enumerate}
\item In Section \ref{sec CP} we introduced the notions weak and strong survival. Other critical parameter values have also been considered in the literature, see for instance \cite{PemantleCP1992}. We next define yet another critical parameter value which to our knowledge has not appeared in the literature before. 
Given a  connected and countable-infinite graph $G=(V,E)$, define
\begin{equation} 
\lambda_T(G) := \sup \left\{\lambda>0 \colon \rho(\lambda,x) =0 \text{ for all } x\in V  \right\},
\end{equation}
where, for $x \in V$, we set 
\begin{equation}
\rho_{\lambda}(x):= \lim_{t \rightarrow \infty} \frac{1}{t} \log \left( \bar{\mathbb{P}}_{\lambda} ( \eta_s(x)=0, s\in [0,t) \right), \quad x\in V.
\end{equation} 
Note that both $\lambda_T(G)$ and $\rho_{\lambda}(x)$  are well defined due to monotonicity of the contact process (both in $G$ and $\lambda$) and by the large deviation principle in Theorem \ref{thm LDP}.

$\quad$It is not difficult to see that $\lambda_T(G)\in (0,\infty)$. An upper bound  on $\lambda_T(G)$ holds since its value is larger than that of weak survival. Indeed, $\bar{\nu}_{\lambda}$ has to be non-trivial in order for $\rho(\lambda)$ to be positive.  Further, $\lambda_c(\mathbb{N})$ yields a lower bound on $\lambda_T(G)$ as follows from Lemma \ref{lem DG1} applied to Lemma 2.5 in \cite{BergBethuelsenCP2018}. In fact, for many graphs, such as $\mathbb{Z}^d$, $d\geq1$, the upper and lower bound can be shown to match and hence $\lambda_T(G)= \lambda_c(G)$ in these cases. 

$\quad$\textbf{Question $1$:} How does $\lambda_T(G)$ relate to the critical values defined through the notions weak survival and strong survival? In particular, does there exist graphs $G$ for which $\lambda_T(G) > \lambda_c(G)$?

$\quad$\textbf{Question $2$}a: Does the contact process on a graph $G$ always converge exponentially fast (in the weak sense) to its equilibrium state when started from all individuals infected when $\lambda>\lambda_c(G)$?

$\quad$\textbf{Question $2$}b: Equivalently (by duality), for the contact process on a graph $G$, does there exists constants $C,c>0$ such that, for all $x \in V$, we have that $\mathbb{P}(s \leq \tau^x< \infty) \leq Ce^{-cs}$ for all $s>0$ whenever $\lambda>\lambda_c(G)$?

\begin{remark}
Any values of $\lambda$ fulfilling the requirements of Question $2$ also satisfy that $\rho(\lambda)>0$ as can been seen by a classical restart argument. Hence, a positive answer to Question $2$ would yield a negative answer to Question $1$. 
\end{remark}

Based on Corollary \ref{cor LDP} a natural followup question is whether large deviation properties hold as soon as $\lambda>\lambda_T$.

$\quad$\textbf{Question $3$:} Does the contact process on a graph $G$ projected onto a finite subset always satisfy large deviation estimates similar to those seen in Corollary \ref{cor LDP}, when $\lambda> \lambda_c(G)$ or $\lambda>\lambda_T(G)$?

\item  As discussed in Section \ref{sec thm phi mixing Zd},  the estimates on the mixing times in Theorem \ref{thm phi mixing Zd} can also be proven for certain other graphs and in more generality for discrete-time analogs of the contact process. Motivated by this the following question seems natural.
 
$\quad$\textbf{Question} 4: 
For each $\lambda>\lambda_c(\mathbb{N})$ and $\epsilon>0$, does there exist constants $\alpha, C,c \in (0,\infty)$ such that the contact process on a graph $G$ projected onto any $\Delta \in \mathcal{S}$ satisfies that $d_{\Delta}(t) \leq C|\Delta| e^{-c t}$ whenever $t \geq \alpha \text{diam}(\Delta) (1+\epsilon)$? If yes, does it also hold for all $\lambda> \lambda_c(G)$ or $\lambda>\lambda_T(G)$?

$\quad$A natural followup questions to Question $4$ is whether the contact process in the regimes considered therein also exhibits the cutoff phenomena, as obtained in Theorem \ref{thm phi mixing Zd} only for the special case of the contact process on $\mathbb{Z}^d$.
It would also be interesting to study the dependency on $\Delta \in \mathcal{S}$ for other statistics of $(\xi^{\Delta})$. One case treated in the literature to date is the asymptotic behaviour of the occurrence times of rare events, which has been studied for the contact process on $\mathbb{Z}$ in \cite{LebowitzSchonmannLDP1987} and \cite{GalvesMarinelliOlvieriCP1989}. We postulate that these works can be extended to general countable-infinite graphs by similar methods to those developed in this paper.

\item We find it somewhat surprising that projections of the  contact process satisfy such a strong ``loss of memory''-property as seen by Theorems \ref{thm phi mixing} and  \ref{thm phi mixing Zd}, and it would be interesting to check whether other types of interacting particle systems may satisfy the same kind of mixing properties.  The main technical tools used in this paper are the dFKG property, self-duality and estimates on the rate of convergences towards the equilibrium state. Presumably, many of our results can be extended to a larger class of attractive spin-flip systems for which such properties are known to hold, see \cite{BezuidenhoutGrayCAS1994} and \cite{LiggettCPcor2006}. Another interesting model to consider is the voter model as studied e.g.\ in \cite{BramsonCoxGriffeath1988}. 

$\quad$\textbf{Question} 5: Is the occupation time process of the Voter model on $\mathbb{Z}^d$, as considered in \cite{BramsonCoxGriffeath1988}, a $\phi$-mixing process for any dimension $d\geq3$? 

\end{enumerate}

\subsection*{Acknowledgement}
The author thanks the IMS and the organisers of the program \emph{Genealogies of Interacting Particle Systems}, July-August 2017, Institute for Mathematical Sciences, Singapore, where this work was initiated. He thanks Daniel Valesin for stimulating discussions at an early stage of this project and the referee for valuable comments. Financial support from the DFG, project GA582/7-2, is greatly acknowledged.

\end{document}